\newtheorem{theorem}{Theorem}[section]
\newtheorem{thm}{Theorem}[section]
 \newtheorem{cor}[thm]{Corollary} 
 \newtheorem{lem}[thm]{Lemma}
 \newtheorem{remark}[thm]{Remark}
 \newcommand{\grad}{\text{grad}\,}
 \newcommand{\diver}{{{\rm{div}}}}
 \newcommand\raisepunct[1]{\,\mathpunct{\raisebox{0.5ex}{#1}}}
 \newtheorem{ex}{Example}
 \numberwithin{equation}{section}
 \newcommand{\hess}{\textrm{Hess}\,}
 \newcommand{\Sec}{\textrm{Sec}}
 \newcommand{\dist}{{\rm dist}}
\begin{document}


\title[Mean exit times from submanifolds]{Mean exit times from  submanifolds \\with bounded mean curvature} 

\author[G. P. Bessa]{G. Pacelli Bessa}
\address{Department of Mathematics, Universidade Federal do Cear\'{a} - UFC, Fortaleza, Brazil.}
\email{bessa@mat.ufc.br}

\author[S. Markvorsen]{Steen Markvorsen} 
\address{DTU Compute, Technical University of Denmark, Lyngby, Denmark.}
\curraddr{}
\email{stema@dtu.dk}
\author[L.F. Pessoa]{Leandro F. Pessoa} 
\address{Department of Mathematics,
Universidade Federal do Piauí - UFPI, Teresina - Piauí, Brazil.}
\email{leandropessoa@ufpi.edu.br}

 \thanks{Research partially supported by Alexander von Humboldt Foundation and Capes-Brazil (Finance Code 001), by Humboldt Return Fellowship BRA1213431HFSTCAPES-E, and by CNPq-Brazil, Grants 303057/2018-1, 306738/2019-8, and 422900/2021-4.}


\maketitle

\begin{abstract}
We show that submanifolds with infinite mean exit time can not be isometrically and minimally immersed into cylinders, horocylinders, cones, and wedges of some product spaces. Our approach is not based on the weak maximum principle at infinity, and thus it permits us to generalize previous results concerning non-immersibility of stochastically complete submanifolds. We also produce estimates for the complete tower of moments for submanifolds with small mean curvature immersed into cylinders.

\vspace{.2cm}

\noindent {\bf Mathematics Subject Classification (2000):}  Primary 53C42, 58J65;\,\, Secondary 58C40, 60J65.

\noindent {\bf Key words:} Mean exit times,  stochastic completeness, minimal submanifolds.
\end{abstract}

\section{Introduction}
An interesting problem in minimal surface theory  stemming from the attempts to settle the Calabi conjecture on the existence of bounded minimal surfaces \cite{calabi} is the following: 
\textit{ \hspace{-2mm}what is the largest open subset $\Omega \subset \mathbb{R}^n_{\raisepunct{,}}$ $n \geq 3$, into which a  complete Riemannian $m$-manifold $M$ satisfying some extra geometric properties can not be isometrically and minimally immersed?} 

This question has been studied by several authors in the literature, 
for instance, as an application of his maximum principle at infinity, H. Omori \cite{omori} proved that complete Riemannian manifolds with sectional curvature bounded from below can not be isometrically and minimally immersed into any non-degenerate Euclidean cone. By non-degenerate Euclidean cones we mean those cones which strictly lie in a half-space of $\mathbb{R}^n_{\raisepunct{.}}$  L. Jorge and F. Xavier   \cite{jorge_xavier_mathz},  proved Omori's non-immersibility  result  in  Euclidean   cones in the class of manifolds with bounded scalar curvature. Moreover, their techniques were robust enough to 
prove the non-immersibility of minimal  surfaces with bounded Gaussian curvature into quite general regions of $\mathbb{R}^3$ such as
 $$A_f = \{(x,y,z) \in \mathbb{R}^3 \colon z > f(y)\},$$
where $f \colon \mathbb{R} \rightarrow \mathbb{R}_+$ is a positive proper function. In the same paper \cite[Thm.1]{jorge_xavier_mathz} the authors considered complete manifolds $M$ whose scalar curvature is bounded from below and showed that they can not be isometrically immersed with small mean curvature\footnote{The mean curvature is assumed smaller than a bound depending on the radius of the ball and the sectional curvature of $\overline{M}$.}  into a normal ball of a general Riemannian manifold $\overline{M}$.

After the extension of Omori's maximum principle at infinity by S.T. Yau  \cite{yau1}, and the subsequent generalizations in \cite{cheng-yau,  pigola-rigoli-setti,ratto-rigoli-setti}, S. Pigola, A.G. Setti and M. Rigoli \cite{pigola-rigoli-setti} proved that  complete minimal Riemannian submanifolds satisfying the Omori-Yau maximum principle can not be isometrically immersed into  non-degenerate cones of $\mathbb{R}^n_{\raisepunct{.}}$ The set of geometric conditions to guarantee the validity of the Omori-Yau maximum principle on minimal submanifolds $\varphi \colon M \rightarrow\mathbb{R}^n$ includes properness of the map $\varphi$ and  Ricci curvature with at most quadratic decay  $\text{Ric}_M(x)\geq -c^2\rho^2(x)$, where  $\rho(x)={\rm dist}(x_0,x)$ is the distance function in $M$, among others. Extensions of these results are given in \cite[Thms.1 \& 2]{jorge_xavier_mathz}, see also \cite{jorge-koutrofiotis, kasue}.

In  \cite{prs_pams},
 Pigola, Rigoli and Setti established a surprising equivalence between the validity of a weak form of the Omori-Yau maximum principle, called the weak maximum principle at infinity, and the stochastic completeness of the manifold. This equivalence was shown to hold for more general elliptic operators and gave new strength to the research on this topic, see \cite{alias_mastrolia_rigoli}.
 
 For instance,
 L. Alias, P. Mastrolia and M. Rigoli  \cite[Thm.5.1]{alias_mastrolia_rigoli} proved the non-immersibility of complete minimal submanifolds $M$ into non-degenerate cones of $\mathbb{R}^{n}$ only assuming the stochastic completeness of $M$. 
 However, in the limit case, it is possible to construct stochastically complete minimal surfaces in  half-spaces of $\mathbb{R}^{3}$. As observed by A. Atsuji in \cite{atsuji}, minimal surfaces   lying between  parallel planes in $\mathbb{R}^{3}$ -- as in Jorge-Xavier's construction -- can in fact be constructed so that they are stochastically complete. In \cite{alias_bessa_dajczer}, L. Alias, G.P. Bessa and M. Dajczer considered geodesically and stochastically complete submanifolds with small mean curvature and showed that they can not be isometrically immersed into a cylindrical region of a Riemannian product $N\times \mathbb{R}^\ell$ (see also \cite{bessa_lima_pessoa, kasue}). This result was also considered for horocylindrically bounded submanifolds of $N\times L$, where $N$ is a Cartan-Hadamard manifold with negatively pinched sectional curvature, and $L$ has sectional curvature bounded from below (c.f. \cite{bessa_lira_pigola_setti}).
 
 Recalling that every stochastically complete manifold has infinite mean exit time, G.P. Bessa and J.F. Montenegro \cite{bessa_montenegro_isop} improved a general bound obtained by S. Markvorsen \cite{markvorsen} for the mean exit time of minimal submanifolds of $\mathbb{R}^n_{\raisepunct{,}}$ and proved that a complete minimal submanifold $M$ can not be isometrically immersed in a cylinder of $N\times \mathbb{R}$ if $M$ has infinite mean exit time. Recall that  the first exit time of Brownian motion $X_t$ of a domain $D \subseteq M$, $X_0=x \in D$,  is defined as
\begin{equation} \tau_{\!_D} = \inf \{t\geq 0 \colon X_t \notin D \}.\label{firstexit}\end{equation}
The mean exit time  of $D$ is then defined as the expectation $E_{D}(x)=\mathbb{E}^x[\tau_{\!_D}]$  with respect to the probability measure $\mathbb{P}^x$ weighting those Brownian paths starting at $x$. Although there is an equivalence between stochastic completeness and infinite mean exit time on model manifolds, there are stochastically incomplete manifolds with infinite mean exit time due to the non-isotropy of this latter property  (cf. \cite{bessa-pigola-setti,pessoa-pigola-setti}).

The main purpose of this work is to address this non-immersibility problem for submanifolds with infinite mean exit time through a unified elliptic point of view. Our strategy allows us to generalize recent non-immersability results from \cite{alias_bessa_dajczer,bessa_montenegro_isop} for submanifolds cylindrically bounded, from \cite{bessa_lira_pigola_setti} for submanifolds horocylindrically bounded, and from \cite{alias_mastrolia_rigoli} for submanifolds minimally immersed into cones, as well as immersed in higher dimensional wedges. Moreover, we also prove estimates for the complete tower of moments from submanifolds cylindrically bounded.
 

\section{Main results}

Let $\varphi \colon M^{m} \to N^{n-\ell}\times L^{\ell}$ be an isometric immersion of a complete $m$-dimensional Riemannian manifold into the Riemannian product $N^{n-\ell}\times L^{\ell}_{\raisepunct{,}}$ where $m\geq \ell+1$, $n-\ell \geq 2$, and suppose that the radial  sectional curvature of $N$ is bounded above $\Sec^{\rm rad}_{N}(x)\leq - G(\rho_{_N}(x))\leq 0 $, where $\rho_{_N}(x)={\rm dist}_N(x,o)$ and $G\in C^{0}(\mathbb{R}_{+})$. We 
 let $h \colon [0,+\infty) \rightarrow \mathbb{R}$ be a $C^2$-solution of the following initial value problem
\begin{equation}
\left\lbrace\begin{array}{l}\label{cauchy_problem_h_intro}
    h'' - Gh = 0,  \\[0.2cm]
    h(0) = 0,\,\, h'(0) = 1,
\end{array}\right.
\end{equation} and let
$\pi_{1} \colon N^{n-\ell}\times L^{\ell}\to N^{n-\ell}$ be  the projection onto the $N$ factor. Likewise, $\pi_{2}$ will denote the projection onto $L$.

In our first result, we consider cylindrically bounded submanifolds with small mean curvature and generalize  results from \cite{alias_bessa_dajczer,bessa_montenegro_isop}. 
\begin{theorem}\label{thmMTE_intro}
Let $\varphi \colon M^{m} \rightarrow N^{n-\ell}\times L^{\ell}$ be a complete Riemannian $m$-submanifold with  $\Sec^{\rm rad}_{N}(x)\leq - G(\rho_{_N}(x))\leq 0 $.  Let $D \subset M$ be a relatively compact open set such that $\pi_1(\varphi(D)) \subset  B_{N}(r_{\!_D})$, where $B_N(r_{\!_D})$ is a normal geodesic ball. If
\begin{equation}\label{hyp_curv_media_intro}
\max_{D}\vert H\vert \leq (m-\ell-\eta)\frac{h'}{h}(r_{\!_D}),
\end{equation} 
for some $h$ satisfying \eqref{cauchy_problem_h_intro} and $\eta > 0$, then   the mean exit time from $D$ is bounded above as follows
\begin{equation}E_{D}\leq \int_{0}^{r_{\!_D}}\frac{1}{h^{\eta - 1}(\tau)}\int_{0}^{\tau}h^{\eta - 1}(\xi)d\xi d\tau.\label{eqMET1}
\end{equation} 
In particular, if $N$ is a Cartan-Hadamard manifold and $M$ is cylindrically bounded, then $M$ has finite mean exit time $E_{M}<\infty$.
\end{theorem}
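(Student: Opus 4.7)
The strategy is to construct an explicit radial supersolution $\Phi$ of the exit-time equation $\Delta^{M}E_{D}=-1$, $E_{D}|_{\partial D}=0$, and then to invoke the maximum principle. Let
\[ F(r):=\int_{0}^{r}\frac{1}{h^{\eta-1}(\tau)}\int_{0}^{\tau}h^{\eta-1}(\xi)\,d\xi\,d\tau, \]
so that $F(r_{\!_D})$ is exactly the bound claimed in \eqref{eqMET1}. A direct computation yields $F(0)=F'(0)=0$, $F'\geq 0$, and the linear ODE
\[ F''(r)+(\eta-1)\,\frac{h'(r)}{h(r)}\,F'(r)=1. \]
The sharp quantitative ingredient on which the whole argument hinges is the inequality $\eta F'(r)h'(r)/h(r)\geq 1$ for $r>0$; this reduces to $A(r):=\eta h'(r)\int_{0}^{r}h^{\eta-1}\,d\xi-h^{\eta}(r)\geq 0$, which follows from $A(0)=0$ and $A'(r)=\eta h''(r)\int_{0}^{r}h^{\eta-1}\geq 0$ (using $h''=Gh\geq 0$). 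The same sign of $h''$ shows that $h'\geq 1$ and that $h'/h$ is non-increasing on $(0,r_{\!_D}]$, which I will use to replace $(h'/h)(r_{\!_D})$ in \eqref{hyp_curv_media_intro} by the pointwise quantity $(h'/h)(\rho_{_N}(\cdot))$.

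Now define $\Phi(x):=F\bigl(\rho_{_N}(\pi_{1}(\varphi(x)))\bigr)$, which is $C^{2}$ on $D$ since $F(r)\sim r^{2}/(2\eta)$ at the origin smooths the singularity of $\rho_{_N}$. Combining the chain rule with the submanifold Laplacian formula and the Hessian comparison
\[ \hess_{N}\rho_{_N}(Y,Y)\geq\frac{h'}{h}(\rho_{_N})\bigl(|Y|^{2}-\langle Y,\nabla\rho_{_N}\rangle^{2}\bigr), \]
valid inside the normal ball $B_{N}(r_{\!_D})$ under the radial curvature bound, one obtains
\[ \Delta^{M}\Phi\geq F''(\rho_{_N})\,\mu+F'(\rho_{_N})\,\frac{h'}{h}(\rho_{_N})(T-\mu)-|H|\,F'(\rho_{_N}), \]
where $\mu:=|\nabla^{M}(\rho_{_N}\!\circ\!\pi_{1}\!\circ\!\varphi)|^{2}\leq 1$ and, for any orthonormal frame $\{e_{i}\}$ of $T_{x}M$, $T=\sum_{i}|(e_{i})_{_N}|^{2}\geq m-\ell$ because the $L$-factor has dimension $\ell$. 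Substituting the ODE for $F''$ and bounding $T\geq m-\ell$ collapses this to
\[ \Delta^{M}\Phi\geq\mu+F'(\rho_{_N})\,\frac{h'}{h}(\rho_{_N})(m-\ell-\eta\mu)-|H|\,F'(\rho_{_N}). \]
Writing $m-\ell-\eta\mu=(m-\ell-\eta)+\eta(1-\mu)$ and using \eqref{hyp_curv_media_intro} with the monotonicity of $h'/h$ (giving $|H|\leq(m-\ell-\eta)(h'/h)(\rho_{_N})$ on $D$) cancels the $(m-\ell-\eta)$-term against the mean-curvature contribution and leaves
\[ \Delta^{M}\Phi\geq\mu+\eta(1-\mu)\,F'(\rho_{_N})\,\frac{h'}{h}(\rho_{_N})\geq\mu+(1-\mu)=1, \]
by the sharp inequality of the first paragraph. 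I expect this collapse of the $\mu$-dependence to be the main technical obstacle: it forces both the exact ODE defining $F$ and the sharpness of the lemma $\eta F'(h'/h)\geq 1$, without which the remainder would only be nonnegative, not at least $1-\mu$.

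Finally, the function $w:=E_{D}+\Phi-F(r_{\!_D})$ satisfies $\Delta^{M}w\geq 0$ on $D$, and on $\partial D$ one has $E_{D}=0$ and $\Phi\leq F(r_{\!_D})$ (since $F$ is increasing and $\pi_{1}(\varphi(\overline{D}))\subset\overline{B_{N}(r_{\!_D})}$), hence $w\leq 0$ on $\partial D$. The maximum principle yields $w\leq 0$ on $D$, i.e., $E_{D}\leq F(r_{\!_D})-\Phi\leq F(r_{\!_D})$, which is \eqref{eqMET1}. For the last assertion, when $N$ is Cartan-Hadamard every metric ball is a normal ball, so cylindrical boundedness $\pi_{1}(\varphi(M))\subset B_{N}(r)$ permits an exhaustion of $M$ by relatively compact open sets $D_{k}\nearrow M$ with the common parameter $r$; the uniform estimate $E_{D_{k}}\leq F(r)$ together with $E_{M}(x)=\lim_{k}E_{D_{k}}(x)$ concludes $E_{M}\leq F(r)<\infty$.
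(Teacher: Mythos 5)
Your proposal is correct and follows essentially the same route as the paper's proof: your barrier $F$ is the paper's $U$ reflected (namely $U=F(r_{\!_D})-F$), your key inequality $\eta F'(h'/h)\geq 1$ is exactly the paper's Lemma \ref{lemmaM} (the convexity $T''\geq 0$ after the substitution $s=\Phi(t)$), and the Hessian comparison, the absorption of the tangential-gradient term, and the final comparison principle are the same. The one caveat is that your replacement of $(h'/h)(r_{\!_D})$ by $(h'/h)(\rho_{_N})$ relies on $h'/h$ being non-increasing, which does \emph{not} follow from $h''\geq 0$ alone (take $G$ vanishing near $0$ and large afterwards); however, the paper's own estimate of the mean-curvature term makes the identical implicit use of this monotonicity, so this is a shared caveat rather than a gap specific to your argument.
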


\begin{remark}\label{rmk1b} When  $\varphi \colon M^m \rightarrow N^{n-\ell}\times L^{\ell}$  is minimal, we may take $\eta=m-\ell$, and the right hand side of \eqref{eqMET1} is the mean exit time of the ball of radius $r_{\!_D}$ in the $(m-\ell)$-dimensional model space $(\mathbb{R}^{n-\ell} \, , ds^2=dt^2+h^2(t)d\theta^2)$ of curvature  $\Sec^{\rm rad}_{_{\mathbb{M}_{h}^{m}}}=-h''/h$.
\end{remark}

As a scholium of Theorem \ref{thmMTE_intro} we extend the result in \cite{bessa_lira_pigola_setti} for horocylindrically bounded submanifolds of $N\times L$ without assuming any curvature bounds on the fiber $L$ and without any lower bound for the sectional curvature of $N$. We  denote by $\mathfrak{B}_{\sigma,r} = \{ x \in N \colon b_{\sigma}(x) \leq r\}$ the horoball defined by a Busemann function $b_\sigma$. 

\begin{theorem}\label{thm_scholium_intro}
Let $\varphi \colon M^m \rightarrow N^{n-\ell}\times L^{\ell}$ be a complete Riemannian submanifold and assume that $N$ is Cartan-Hadamard with $\Sec_N \leq -b< 0$. For each compact set $D \subset M$ such that $\pi_{1}(\varphi(D)) \subset  \mathfrak{B}_{\sigma,r_{\!_D}}$, we assume that
\begin{eqnarray}\label{hyp_curv_media_busem_intro}
\max_{D}\vert H\vert < (m-\ell)\sqrt{b}\,\coth(\sqrt{b}\,r_{\!_D}).
\end{eqnarray}
Then there exists a constant $C=C(m,b,r_{\!_D}, \sup_{M}\vert H\vert)$, such that $E_{D}\leq C$. In particular, if $M$ is horocylindrically bounded, then $M$ has finite mean exit time $E_{M}<\infty$.
\end{theorem}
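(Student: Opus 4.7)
My plan is to mimic the argument of Theorem \ref{thmMTE_intro} with the Busemann function $b_\sigma$ in place of the radial distance $\rho_N$, so that the horoball $\mathfrak{B}_{\sigma, r_D}$ plays the role of the geodesic ball $B_N(r_D)$. A direct reduction to Theorem \ref{thmMTE_intro} via enclosing the compact set $\pi_1(\varphi(D))$ inside a finite geodesic ball is not available in general: while compactness provides such a ball, its radius $R$ will typically exceed $r_D$, whereas the hypothesis is sharp at $r_D$. In particular, for $|H|$ in the range $((m-\ell)\sqrt{b},\,(m-\ell)\sqrt{b}\coth(\sqrt{b}r_D))$ no choice of $\eta>0$ will satisfy $|H|\leq (m-\ell-\eta)\sqrt{b}\coth(\sqrt{b}R)$ for large $R$. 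Hence a direct computation using $b_\sigma$ is required.

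The key analytic input is the Hessian comparison for the Busemann function. On a Cartan--Hadamard $N$ with $\Sec_N\leq -b$, passing to the limit $t\to\infty$ in the standard Hessian comparison for $\rho_{\sigma(t)}$ yields
\begin{equation*}
\hess b_\sigma \geq \sqrt{b}\,(g_N - db_\sigma \otimes db_\sigma), \qquad |\nabla^N b_\sigma|\equiv 1.
\end{equation*}
Combining with the Gauss formula for $\varphi$, for any $F\in C^2$ and $u:= F\circ b_\sigma \circ \pi_1\circ\varphi$ on $D$ one obtains
\begin{equation*}
\Delta^M u = F''(b_\sigma)\,\beta + F'(b_\sigma)\,\Delta^M(b_\sigma\circ\pi_1\circ\varphi),
\end{equation*}
where $\beta:= |\nabla^M(b_\sigma\circ\pi_1\circ\varphi)|^2\in [0,1]$, together with
\begin{equation*}
\Delta^M(b_\sigma\circ\pi_1\circ\varphi) \geq \sqrt{b}(m-\ell-\beta) - |H|,
\end{equation*}
using $\sum_i |e_i^N|^2 \geq m-\ell$ for any orthonormal frame $\{e_i\}$ of $T_pM$.

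I would then construct a bounded barrier $u = F(b_\sigma\circ\pi_1\circ\varphi)$ on the slab $\{b_\sigma\leq r_D\}$ with $\Delta^M u \leq -1$ on $D$. Using the strict margin in the hypothesis $|H| < (m-\ell)\sqrt{b}\coth(\sqrt{b}r_D)$, the function $F$ can be chosen as the bounded solution of a linear ODE on $(-\infty,r_D]$, analogous to the effective-dimension model mean-exit function from the proof of Theorem \ref{thmMTE_intro} but adapted to the horoball geometry, with parameters depending only on $(m, b, r_D, \sup_M|H|)$. The elliptic comparison principle then gives $E_D(p)\leq \max_D u - \min_{\partial D} u \leq C(m, b, r_D, \sup_M|H|)$.

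The main obstacle is precisely the choice of $F$. Since the Hessian comparison for $b_\sigma$ supplies only the bare constant $\sqrt{b}$, not the sharper $\sqrt{b}\coth(\sqrt{b}r_D)$ appearing in the hypothesis, a pure exponential ansatz $F(t)=A e^{\mu t}$ only covers the regime $|H|<(m-\ell)\sqrt{b}$; the extra $\coth$-factor slack must be recovered from the $F''\beta$ term in the Laplacian, which demands a delicate balance between rapid decay of $F''$ and global boundedness of $F$ on $(-\infty,r_D]$. Once $F$ is in hand, the in-particular claim follows: if $M$ is horocylindrically bounded inside some fixed $\mathfrak{B}_{\sigma, r_0}$, every compact exhaustion $D_n\nearrow M$ satisfies $\pi_1(\varphi(D_n))\subset \mathfrak{B}_{\sigma, r_0}$, so the uniform bound $E_{D_n}\leq C(m,b,r_0,\sup_M|H|)$ passes to the limit and yields $E_M<\infty$.
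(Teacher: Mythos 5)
Your overall strategy coincides with the paper's: run the comparison argument of Theorem \ref{thmMTE_intro} with the Busemann function $b_\sigma$ in place of $\rho_N$, using $\hess b_\sigma \geq \sqrt{b}\,(\langle\,,\,\rangle - db_\sigma\otimes db_\sigma)$ and $|\nabla b_\sigma|\leq 1$. However, there are two genuine gaps. First, you take the Hessian comparison for $b_\sigma$ for granted by ``passing to the limit $t\to\infty$'' in the comparison for $\rho_{\sigma(t)}$. That limit is exactly where the work lies: the pointwise inequalities for $\hess b_t$ only pass to the limit (first distributionally, then pointwise) once one knows that $\nabla b_{\sigma(t)}\to\nabla b_\sigma$ locally uniformly, and under a one-sided bound $\Sec_N\leq -b$ (no pinching, no lower curvature bound) this is not standard. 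The paper isolates this as Lemma \ref{busemman_hessian} and proves the gradient convergence via the Heintze--Im Hof argument together with O'Sullivan's uniform distance bound on manifolds without focal points; removing the pinching hypothesis of \cite{bessa_lira_pigola_setti} is precisely the point of the theorem, so this step cannot be waved through.

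Second, and more seriously, you never produce the barrier. Your proposal ends by naming ``the choice of $F$'' as the main obstacle and observing that a first-order (exponential) ansatz only reaches the threshold $(m-\ell)\sqrt{b}$, not the stated $(m-\ell)\sqrt{b}\coth(\sqrt{b}\,r_{\!_D})$; you then defer the construction to ``a delicate balance'' between $F''$ and boundedness of $F$ on $(-\infty,r_{\!_D}]$. Without an explicit $F$ with $\triangle_M(F\circ b_\sigma\circ\varphi)\leq -1$ and a bound on $\sup_D F$ in terms of $(m,b,r_{\!_D},\sup_M|H|)$, there is no proof. The paper closes this by reusing the same one-dimensional supersolution $U$ as in Theorem \ref{thmMTE_intro} (built from a positive solution of $h''=bh$ via $s=\Phi(b_\sigma)$, $T(s)=U(t)$), and concluding with Lemma \ref{lemmaM} ($T'<0$, $T''\geq 0$) and $|\nabla b_\sigma|\leq 1$. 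Your diagnosis of where the $\coth$-factor must come from is perceptive and does point at a real subtlety in this step, but identifying the difficulty is not the same as resolving it: as submitted, the argument establishes the conclusion only under the stronger hypothesis $\max_D|H|<(m-\ell)\sqrt{b}$.
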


On a complete manifold $M_{\!\raisepunct{,}}$ finiteness of the mean exit time corresponds to the integrability of the positive Green kernel. On the other hand,  in \cite{grigoryan_summable}, A. Grigor'yan proved that the non-integrability of the Green kernel is equivalent to the validity of the $L^1$-Liouville property for superharmonic functions on $M$: every non-negative, superharmonic function $u \in L^1(M)$ must be constant. Motivated by these relations, the authors in \cite[Prop.43]{pessoa-pigola-setti} extended this cycle of ideas to consider a Dirichlet $L^1$-Liouville property on manifolds with boundary, and investigated the mean exit time of cones under Dirichlet boundary condition.\footnote{ We refer the interested readers to Appendix II for a slight generalization of \cite[Prop.43]{pessoa-pigola-setti} and a dual version of \cite[Thm.44]{pessoa-pigola-setti}.} Given  a closed manifold $L^{\ell-1}$, for $\ell \geq 2$, an $\ell$-dimensional cone over a domain $\Omega \subset L$ is the subset  
$$C_\Omega = \{(r,\theta) \in [0, \infty) \times_{r} L^{\ell-1} \colon \theta \in \Omega \}. $$
In what follows we denote by $\lambda_1(\Omega)$ the smallest eigenvalue of the Laplace-Beltrami operator $-\triangle$ with respect to the domain $\Omega \subset L$.

\begin{theorem}[Pessoa-Pigola-Setti]\label{prop_eigenvalue_intro}
Let $C_\Omega$ be a cone over a domain $\Omega \subset L$ in the warped product $[0,+\infty) \times_{r} L^{\ell-1}$. Then $C_\Omega$ has finite Dirichlet mean exit time if and only if 
\begin{equation}\label{eigen_main_hyp_intro}
\lambda_1(\Omega) > 2\ell.
\end{equation}
\end{theorem}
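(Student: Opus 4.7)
The strategy is to exploit the warped product structure of $C_\Omega = [0,\infty) \times_r L^{\ell-1}$ by seeking mean exit time candidates of the separated form $v(r,\theta) = r^2\psi(\theta)$. Using the cone Laplacian
$$\Delta u = \partial_r^2 u + \frac{\ell-1}{r}\partial_r u + \frac{1}{r^2}\Delta_L u,$$
a direct calculation gives $\Delta(r^{2}\psi) = 2\ell\,\psi + \Delta_L \psi$. Hence seeking a non-negative $v$ on $C_\Omega$ with $\Delta v = -1$ and $v|_{\partial C_\Omega} = 0$ reduces to the Dirichlet problem $-\Delta_L \psi - 2\ell\,\psi = 1$ on $\Omega$, $\psi|_{\partial\Omega}=0$.

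For \emph{sufficiency}, when $\lambda_1(\Omega) > 2\ell$ the value $2\ell$ lies strictly below the Dirichlet spectrum of $-\Delta_L$, so Fredholm theory and the strong maximum principle (applied to the positive source $1$) yield a unique positive $\psi$. Then $v = r^2\psi$ satisfies $\Delta v = -1$ with $v|_{\partial C_\Omega}=0$. Since the Dirichlet mean exit time of $C_\Omega$ is the minimal non-negative solution of this PDE --- obtained as the monotone limit of the mean exit times of the exhaustion $C_\Omega^R = C_\Omega \cap \{r < R\}$ --- we conclude $E_{C_\Omega}(r,\theta) \leq r^2\psi(\theta) < \infty$.

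For \emph{necessity}, assume $\lambda_1(\Omega) \leq 2\ell$, and let $E_R := E_{C_\Omega^R}$. By monotone convergence $E_R \nearrow E_{C_\Omega}$, so it is enough to show that $E_R$ diverges at some interior point. Expanding $E_R(r,\theta) = \sum_k a_{k,R}(r)\phi_k(\theta)$ along the Dirichlet eigenbasis $\{\phi_k\}$ of $\Omega$ with $\phi_1 > 0$, and writing $-1 = \sum_k c_k \phi_k$ (so $c_1 < 0$), each radial profile solves the Euler-type ODE
$$a_{k,R}'' + \frac{\ell-1}{r}a_{k,R}' - \frac{\lambda_k}{r^2}a_{k,R} = c_k,\qquad a_{k,R}(0)=a_{k,R}(R)=0.$$
For the first mode and $\lambda_1 < 2\ell$, the characteristic roots $\alpha_1^\pm = \tfrac{1}{2}(2-\ell \pm \sqrt{(\ell-2)^2 + 4\lambda_1})$ satisfy $\alpha_1^+ < 2$, and one gets $a_{1,R}(r_0) \sim R^{2-\alpha_1^+} \to +\infty$ at each fixed $r_0>0$; in the borderline case $\lambda_1 = 2\ell$ (where $\alpha_1^+ = 2$), the resonant ansatz $r^2\log r$ produces $a_{1,R}(r_0) = \tfrac{c_1}{\ell+2} r_0^2 \log(r_0/R) \to +\infty$. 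Orthogonality now forces
$$\int_\Omega E_R(r_0,\theta)\phi_1(\theta)\,dV_L = a_{1,R}(r_0) \|\phi_1\|_{L^2}^2 \longrightarrow \infty,$$
and a weak Harnack inequality for non-negative super-solutions of $-\Delta u = 1$ transfers this divergence to pointwise blow-up of $E_R(r_0,\theta_0)$ at any interior $(r_0,\theta_0)$, whence $E_{C_\Omega}(r_0,\theta_0) = +\infty$.

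The main obstacle is twofold: (i) justifying that $E_{C_\Omega}$ really is the minimal non-negative solution of $\Delta u = -1$ with $u|_{\partial C_\Omega} = 0$ on the non-compact domain $C_\Omega$ (via Dynkin's formula on the exhaustion and monotone passage to the limit, together with care about the metric singularity at the apex $r=0$), and (ii) handling the resonant regime $\lambda_1(\Omega) = 2\ell$, where the clean $r^2\psi$ ansatz collapses and the correct leading order of $E_R$ becomes logarithmic in $R$.
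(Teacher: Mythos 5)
First, note that the paper does not actually prove this statement in-house: it is quoted from [Prop.\ 43] of \cite{pessoa-pigola-setti}, and Appendix II only revisits the \emph{sufficiency} half (for general warping functions) by building a supersolution of the product form $u(t)\phi(\theta)$ with $\phi$ the first eigenfunction of a slightly enlarged domain. Your sufficiency argument is a clean, correct variant of that: the identity $\triangle(r^2\psi)=2\ell\psi+\triangle_L\psi$ reduces everything to solving $(-\triangle_L-2\ell)\psi=1$ in $\Omega$ with $\psi|_{\partial\Omega}=0$, and when $2\ell<\lambda_1(\Omega)$ the generalized maximum principle gives $\psi>0$, so $r^2\psi$ dominates each $E_{C_\Omega^R}$ by comparison. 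That half is fine, and your identification of the threshold via the indicial roots ($\alpha^+=2$ exactly when $\lambda_1=2\ell$, with the resonant $r^2\log r$ correction at equality) is exactly the right mechanism.

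The gap is in the last step of the \emph{necessity} direction. You correctly derive that $a_{1,R}(r_0)=\|\phi_1\|_{L^2}^{-2}\int_\Omega E_R(r_0,\theta)\phi_1(\theta)\,dV_L\to+\infty$, but this is an integral over a \emph{full cross-section} $\{r_0\}\times\Omega$, reaching all the way to the lateral boundary where $E_R$ vanishes. The weak Harnack inequality is an interior estimate: it converts divergence of an average over a ball \emph{compactly contained} in $C_\Omega$ into divergence of the pointwise infimum on a smaller ball. A priori the divergence of the slice integral could be carried by mass concentrating near $\partial\Omega$ (steeper and steeper spikes of $E_R$ just inside the lateral boundary) while $E_R$ stays locally bounded on compact subsets of the interior, and then weak Harnack gives nothing. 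Closing this requires a boundary barrier or boundary Harnack input that you have not supplied. The fix is easy and stays entirely within your framework: instead of reading off the first Fourier mode of $E_R$, use the same radial ODE as a \emph{subsolution}. Let $b_R$ solve $b_R''+\tfrac{\ell-1}{r}b_R'-\tfrac{\lambda_1}{r^2}b_R=-1$, bounded at $0$ with $b_R(R)=0$; explicitly $b_R(r)=\tfrac{1}{2\ell-\lambda_1}\bigl(R^{2-\alpha^+}r^{\alpha^+}-r^2\bigr)\ge 0$ when $\lambda_1<2\ell$ (with the logarithmic analogue at $\lambda_1=2\ell$). Then $v_R=b_R(r)\,\phi_1(\theta)/\|\phi_1\|_\infty$ satisfies $\triangle v_R=-\phi_1/\|\phi_1\|_\infty\ge -1$ and $v_R\le 0=E_R$ on $\partial C_\Omega^R$, so the comparison principle gives $E_R\ge v_R$ pointwise, and $v_R(r_0,\theta_0)\to+\infty$ at every interior point. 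This yields the pointwise blow-up directly, with no Harnack step and no eigenfunction expansion of $E_R$ to justify.
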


Now let $\varphi \colon M^m\! \rightarrow \mathbb{R}^{n}=[0, \infty)\times_r \mathbb{S}^{n-1}$ be a minimal immersion of an  $m$-manifold $M$, and assume that $\varphi (M)\subset [0,\infty)  \times_r \Omega_\theta^{n-1}$ for some geodesic ball $\Omega_{\theta}^{n-1}\subset \mathbb{S}^{n-1}$ of radius $\theta$, centred at the north pole. By Theorem \ref{prop_eigenvalue_intro}, the mean exit time of the cone $C_{\Omega_{\theta}^{n-1}}=[0,\infty)  \times_r \Omega_\theta^{n-1}$ is finite if and only if $\lambda_1(\Omega_{\theta}^{n-1})>2n$. 
However, in view of the comparison theorems from \cite{bessa_montenegro_isop,markvorsen} the expected dimension of comparison should be $m-1$. Therefore one might expect  that the finiteness of the mean exit time of $M$ depends on  the first Dirichlet eigenvalue of a geodesic ball in $\mathbb{S}^{m-1}$. 

In our second contribution we are going to provide a sufficient condition for the finiteness of the mean exit time on minimal submanifolds contained in cones of $\mathbb{R}^n$ in terms of an upper bound for the width of the cone $[0, \infty)\times_r \Omega_{\theta\raisepunct{,}}^{n-1}$ instead of the first Dirichlet eigenvalue $\lambda_1(\Omega_\theta^{n-1})$. The techniques developed to estimate the mean exit time of minimal submanifolds bounded by a non-degenerate cone are robust enough to consider immersed  minimal submanifolds into non-degenerate wedges of $\mathbb{R}^{n+2}$ for $n\geq 2$.

Let $M^m$ be a minimal submanifolds isometrically immersed into the Riemannian product space $N^n \times L^\ell \times P^k$, where $N,L$ and $P$ are complete Riemannian manifolds and $n\geq 2$, $\ell\geq 1$ and $k\geq 0$. Given $\alpha> 0$ let us consider the wedge 
$$W_\alpha \doteq \{ (x,y,z) \in N \times L \times P \colon \rho_N(x) \leq \alpha \rho_L(y) \},$$
where $\rho_N(x)$ and $\rho_L(y)$ denote the distance functions to the origins $o_N \in N$ and $o_L \in L$, respectively.

\begin{theorem}\label{thm_wedge_intro}
Let $\varphi \colon M^m \rightarrow N^n \times L^\ell \times P^k$ be a minimal submanifold with $n\geq 2$, $\ell\geq 1$ and $k\geq 0$. Assume that  $\Sec_N \leq 0 \leq \Sec_L$, and $\varphi(M) \subset W_\alpha$, for some $\alpha>0$. If $m - (\alpha^2+1)\ell - k > 0$, then $M$ has finite mean exit time $E_{M}<\infty$.
\end{theorem}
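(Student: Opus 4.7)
The plan is to construct a smooth non-negative function $w$ on $M$ satisfying $\Delta^{M} w \leq -c_{0}$ for a fixed positive constant $c_{0}$; a standard comparison with the Poisson equation $\Delta E_{D} = -1$ will then force $E_{M}(x) \leq w(x)/c_{0} < \infty$ at every $x \in M$.

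The natural candidate, writing $\pi_{1},\pi_{2},\pi_{3}$ for the projections onto $N,L,P$, is
\[
 w \;=\; \alpha^{2}\,\rho_{L}^{2}\!\circ\!\pi_{2}\!\circ\!\varphi \;-\; \rho_{N}^{2}\!\circ\!\pi_{1}\!\circ\!\varphi,
\]
which is pointwise non-negative on $M$ by the wedge condition $\varphi(M)\subset W_{\alpha}$. Since $\varphi$ is minimal, $\Delta^{M} f$ for any $f$ pulled back from the ambient space equals the trace over $T_{x}M$ of the ambient Hessian of $f$. Hessian comparison against Euclidean space gives $\hess \rho_{N}^{2}\geq 2g_{N}$ (from $\Sec_{N}\leq 0$) and $\hess \rho_{L}^{2}\leq 2g_{L}$ (from $\Sec_{L}\geq 0$) on the smooth locus. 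Writing any orthonormal basis $\{e_{i}\}$ of $T_{x}M$ as $e_{i}=(a_{i},b_{i},c_{i})\in TN\oplus TL\oplus TP$, these bounds combine to
\[
 \Delta^{M} w \;\leq\; 2\alpha^{2}\sum_{i=1}^{m}|b_{i}|^{2} \;-\; 2\sum_{i=1}^{m}|a_{i}|^{2}.
\]

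The orthogonality $|a_{i}|^{2}+|b_{i}|^{2}+|c_{i}|^{2}=1$, summed over $i$, reads $\sum|a_{i}|^{2}+\sum|b_{i}|^{2}+\sum|c_{i}|^{2}=m$. Since the differential of $\pi_{3}$ restricted to $T_{x}M$ has rank at most $k$ and operator norm $\leq 1$, $\sum|c_{i}|^{2}\leq k$, and analogously $\sum|b_{i}|^{2}\leq \ell$. Hence
\[
 \Delta^{M} w \;\leq\; 2(\alpha^{2}+1)\sum|b_{i}|^{2} - 2(m-k) \;\leq\; -2\bigl[m - (\alpha^{2}+1)\ell - k\bigr] \;=:\; -c_{0} \;<\; 0.
\]
With $w\geq 0$ and $\Delta^{M}(w/c_{0})\leq -1$, on any relatively compact $D\subset M$ the function $E_{D}-w/c_{0}$ is subharmonic with non-positive boundary values, so the maximum principle gives $E_{D}(x)\leq w(x)/c_{0}$; exhausting $M$ by such $D$ yields the pointwise bound $E_{M}(x)\leq w(x)/c_{0}<\infty$.

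The main technical obstacle is the possible non-smoothness of $\rho_{N}^{2}$ and $\rho_{L}^{2}$ along the cut loci of the reference points $o_{N}$ and $o_{L}$, since neither $N$ nor $L$ is assumed simply connected. I would dispatch this in the standard way via Calabi's barrier trick, replacing the distance-squared functions by appropriate smooth upper/lower barriers (or, equivalently, interpreting the two Hessian inequalities in the viscosity sense); the maximum-principle comparison yielding $E_{D}\leq w/c_{0}$ remains valid in this weaker sense.
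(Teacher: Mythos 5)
Your proof is correct and follows essentially the same route as the paper: the paper's test function is $u=(\alpha^{2}\rho_L^{2}-\rho_N^{2})/(2\gamma)$ with $\gamma=m-(\alpha^{2}+1)\ell-k$, which is exactly your $w/c_{0}$, and it derives $\triangle_M u\leq-1$ from the same two Hessian comparisons, the same trace bounds $\sum_i|b_i|^{2}\leq\ell$ and $\sum_i|c_i|^{2}\leq k$, and the same comparison-principle conclusion. The only difference is that you explicitly flag the cut-locus regularity of $\rho_N$ and $\rho_L$, a point the paper passes over in silence.
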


\begin{remark}{\rm The condition $m - (\alpha^2+1)\ell - k > 0$ does not hold when $m=2$ and  $n=\ell=k=1$. Therefore, Theorem \ref{thm_wedge_intro} does not apply to complete minimal surfaces immersed in non-degenerated wedges of $\mathbb{R}^{3}_{\raisepunct{.}}$  However, minimal surfaces immersed in any non-degenerate wedge of $\mathbb{R}^3$ are stochastically incomplete (cf. \cite{boscacci}).}
\end{remark}

As a direct consequence of Theorem \eqref{thm_wedge_intro} (case $k=0$), we can consider minimal immersions into non-degenerate Euclidean cones.

\begin{cor}\label{cor_cone_intro}
Let $\varphi \colon M^m \rightarrow \mathbb{R}^{n+1}$, $n\geq 2$, be a minimal submanifold immersed in a non-degenerated cone 
$$C_\theta = \{ y \in \mathbb{R}^{n+1} \colon \langle y, e_{n+1}\rangle \geq \cos \theta\}.  $$
If $\tan \theta \leq \sqrt{m-1}$, then $M$ has finite mean exit time.
\end{cor}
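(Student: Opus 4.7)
The plan is to recognize Corollary \ref{cor_cone_intro} as the $k=0$ specialization of Theorem \ref{thm_wedge_intro}. The first step is to split the ambient $\mathbb{R}^{n+1}=\mathbb{R}^{n}\times\mathbb{R}$, where the second factor is the line spanned by $e_{n+1}$, and to write each $y\in\mathbb{R}^{n+1}$ as $y=(x,t)$ with $t=\langle y,e_{n+1}\rangle$ and $x=y-t\,e_{n+1}$. Since the product metric on $\mathbb{R}^{n}\times\mathbb{R}$ coincides with the Euclidean metric, $\varphi$ remains a minimal immersion into $N\times L$ with $N=\mathbb{R}^{n}$ (so $\Sec_{N}\equiv 0\leq 0$), $L=\mathbb{R}$ (so $\Sec_{L}\equiv 0\geq 0$), $\ell=1$, and no $P$-factor ($k=0$). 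The hypothesis $n\geq 2$ of Theorem \ref{thm_wedge_intro} is exactly the hypothesis on the dimension of the target cone.

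The second step, which is the only geometric content, is to establish the inclusion $C_{\theta}\subset W_{\tan\theta}$. For $y=(x,t)\in C_{\theta}$ one has $t=\langle y,e_{n+1}\rangle\geq |y|\cos\theta>0$, and squaring gives
\begin{equation*}
t^{2}\geq (|x|^{2}+t^{2})\cos^{2}\theta,\qquad\text{i.e.}\qquad |x|^{2}\cos^{2}\theta\leq t^{2}\sin^{2}\theta,
\end{equation*}
so that $|x|\leq\tan\theta\cdot t$. Taking origins $o_{N}=0\in\mathbb{R}^{n}$ and $o_{L}=0\in\mathbb{R}$ one has $\rho_{N}(x)=|x|$ and $\rho_{L}(t)=|t|=t$, hence the condition $\rho_{N}\leq\tan\theta\cdot\rho_{L}$ defining $W_{\tan\theta}$ holds on $C_{\theta}$.

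The third step is to feed these data into Theorem \ref{thm_wedge_intro} with $\alpha=\tan\theta$. The dimension condition $m-(\alpha^{2}+1)\ell-k>0$ reduces to $\tan^{2}\theta<m-1$, i.e.\ $\tan\theta<\sqrt{m-1}$, and under this strict inequality Theorem \ref{thm_wedge_intro} immediately delivers $E_{M}<\infty$. The only mildly delicate point is the endpoint $\tan\theta=\sqrt{m-1}$ appearing in the stated corollary: the natural reading is that the inequality there is strict (matching the hypothesis of Theorem \ref{thm_wedge_intro}), or alternatively one invokes a limiting version of the wedge estimate at the critical opening angle. No serious obstacle arises since the entire argument reduces to the elementary geometric inclusion $C_{\theta}\subset W_{\tan\theta}$.
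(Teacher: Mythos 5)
Your proof is correct and is precisely the argument the paper intends: the paper gives no details beyond declaring the corollary a direct consequence of Theorem \ref{thm_wedge_intro} with $k=0$, and your decomposition $\mathbb{R}^{n+1}=\mathbb{R}^{n}\times\mathbb{R}$ together with the elementary inclusion $C_{\theta}\subset W_{\tan\theta}$ supplies exactly the missing verification. Your flag on the endpoint is also warranted: at $\tan\theta=\sqrt{m-1}$ one has $\gamma=m-(\alpha^{2}+1)\ell-k=0$, so the paper's test function $u=(\alpha^{2}\rho_{L}^{2}-\rho_{N}^{2})/(2\gamma)$ degenerates and Theorem \ref{thm_wedge_intro} only covers the strict inequality; the non-strict ``$\leq$'' in the corollary (repeated in Remark \ref{remark 3} and Appendix I) is not justified by the cited argument.
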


\begin{remark}
   A. Atsuji    \cite{atsuji2} proved that $\mathbb{E}[\tau^{p}]<\infty$,  $0<p<1/2$,    for complete minimal submanifolds immersed in non-degenerate cones  of   $\mathbb{R}^{n}_{\raisepunct{.}}$ 
\end{remark}

\begin{remark}{\rm \label{remark 3}Let $\varphi\colon M^m\to  \mathbb{R}^{n}$  be a complete minimal submanifold immersed in a non-degenerate cone $C_{\theta}=[0, \infty)\times_r \Omega_\theta^{n-1} $. If $m=2$, the surface $M$ has finite mean exit time if $\theta\leq \pi/4$. It is straightforward to  show that $\lambda_1(\Omega_{\pi/4}^1)=4$. For $m=3$, $M$ has finite mean exit time if $\theta\leq \arctan \sqrt{2} \doteq \theta_0$, and numerical estimates show that $\lambda_1(\Omega_{\theta_0}^2)\gtrsim 5.85$.}
\end{remark}

In our last result we consider   the tower of exit time moments from  cylindrically bounded submanifolds as in the setting of Theorem \ref{thmMTE_intro}.
From a probabilistic point of view, the $k^{\rm th}$-exit moments of a domain $D\subseteq M$, $k\geq 1$, is the expected value $u_k(x)=\mathbb{E}^{x}(\tau_{\!_D}^{k})$. 
The  $k^\text{th}$-exit moments are deeply intertwined with the geometry of the underlying domain.   For instance, the exit moments are related to the spectrum of the underlying domain, the Faber-Krahn inequality,  isoperimetric inequalities, the heat content, and the technique of symmetrization,  see 
\cite{aizenman-simon,akhizier, burchard-schmuck, mcdonald02, mcdonald03, mcdonald13, pinsky, polya} and references therein. In our next result, we apply Theorem \ref{thmMTE_intro} to derive an upper bound to the $k^\text{th}$-exit moments for domains of submanifolds with small mean curvature.  Precisely, we prove the following result.

\begin{cor}\label{tower_moments_intro}
Let $\varphi \colon M^{m} \rightarrow N^{n-\ell}\times L^\ell$ be a complete  immersed Riemannian submanifold, and assume that  the hypotheses of Theorem \ref{thmMTE_intro} hold. Then the $k^{th}$-exit moment from $D$ is bounded from above by
\begin{equation}u_{_D}^k \leq k! \left(\int_{0}^{r_{_D}} \frac{1}{h^{\eta -1}(\tau)}\int_0^\tau h^{\eta -1} (\xi)d\xi d\tau\right)^k.\label{eq1.4}\end{equation}
In particular, if $\varphi(M)$ is cylindrically bounded, then $M$ has finite $k^\text{th}$-exit moment for all $k\in \{1,2,3, \ldots\}$.
\end{cor}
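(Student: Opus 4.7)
The plan is to induct on $k$, leveraging the auxiliary function built inside the proof of Theorem \ref{thmMTE_intro}. The exit time moments satisfy the recursive Dirichlet problem
\begin{equation*}
-\Delta u_{k+1} = (k+1)\, u_k \quad\text{in } D, \qquad u_{k+1}\big|_{\partial D} = 0, \qquad u_0\equiv 1,
\end{equation*}
so that $u_1 = E_D$ and the case $k=1$ of \eqref{eq1.4} recovers \eqref{eqMET1}.

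The engine is the comparison function produced in the proof of Theorem \ref{thmMTE_intro}: under its hypotheses one constructs a non-negative function $\psi\colon D \to \mathbb{R}$, essentially of the form $\psi = F \circ \rho_N \circ \pi_1 \circ \varphi$ with
\[
F(r) \;:=\; \int_0^r \frac{1}{h^{\eta-1}(\tau)}\int_0^\tau h^{\eta-1}(\xi)\,d\xi\,d\tau,
\]
satisfying $-\Delta^M \psi \geq 1$ on $D$, $\psi \geq 0$ on $\partial D$, and the pointwise bound $\psi \leq A := F(r_D)$. Suppose inductively that $u_k \leq k!\, A^k$ on $D$. Setting $v_{k+1} := (k+1)!\, A^k\, \psi$, one has $v_{k+1}\geq 0 = u_{k+1}$ on $\partial D$ and
\begin{equation*}
-\Delta v_{k+1} = (k+1)!\, A^k\, (-\Delta \psi) \;\geq\; (k+1)!\, A^k \;\geq\; (k+1)\, u_k = -\Delta u_{k+1}.
\end{equation*}
The weak maximum principle applied to $v_{k+1}-u_{k+1}$ yields $u_{k+1}\leq v_{k+1}\leq (k+1)!\, A^{k+1}$, closing the induction and producing \eqref{eq1.4}.

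For the cylindrically bounded case, $\varphi(M)\subset B_N(r_0)\times L$ for some $r_0$; the hypothesis \eqref{hyp_curv_media_intro} is assumed to hold uniformly with $r_D$ replaced by $r_0$, giving a single constant $A = F(r_0)<\infty$. Take a relatively compact exhaustion $D_j \uparrow M$; the previous estimate applies on each $D_j$ with the same $A$, so $\mathbb{E}^x[\tau_{D_j}^k]\leq k!\, A^k$ uniformly in $j$. As $\tau_{D_j}\nearrow\tau_M$, monotone convergence yields $\mathbb{E}^x[\tau_M^k]\leq k!\,A^k<\infty$ for every $x\in M$ and every $k\geq 1$.

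The main point requiring care is that the comparison function $\psi$ delivered by the proof of Theorem \ref{thmMTE_intro} must simultaneously satisfy the differential inequality $-\Delta^M\psi\geq 1$ on $D$ and the boundary inequality $\psi\geq 0$ on $\partial D$; both come out of Hessian comparison for $\rho_N$ on the curvature-bounded factor (via the Jacobi solution $h$) combined with the small-mean-curvature hypothesis \eqref{hyp_curv_media_intro}. Granted this, the tower estimate reduces to the clean induction above, and the cylindrical case follows by exhaustion and monotone convergence.
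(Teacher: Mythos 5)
Your argument is correct and follows essentially the same route as the paper: both run an induction on the Poisson hierarchy $-\triangle u^{k+1}_D=(k+1)u^k_D$ using the radial comparison function from the proof of Theorem \ref{thmMTE_intro} (the paper writes it as $U_k = kC_kU$ with $C_k=\max_D u^{k-1}_D$, which is the same comparison-principle step as your $v_{k+1}=(k+1)!\,A^k\psi$), and the cylindrically bounded case follows by exhaustion exactly as you describe. The only slip is notational: the comparison function is $\psi = F(r_{_D})-F(\rho_N\circ\pi_1\circ\varphi)$, i.e.\ the \emph{decreasing} primitive $U(t)=\int_t^{r_{_D}}h^{1-\eta}(\tau)\int_0^\tau h^{\eta-1}(\xi)\,d\xi\,d\tau$, rather than $F\circ\rho_N$ itself — the three properties you actually use ($-\triangle_M\psi\geq 1$, $\psi\geq 0$ on $\partial D$, $\psi\leq F(r_{_D})$) are those of this decreasing version.
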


\begin{remark} When $M^m$ is a minimal submanifold, the $k^{\text{th}}$-exit moment from $D$  is bounded  from above by the $k^{\text{th}}$-exit moment of the ball of radius $r_{\!_D}$ of the $(m-\ell)$-dimensional space form of constant curvature $b$, which is  estimated from above by the right-hand  side of \eqref{eq1.4}, see Section \ref{sectionExit}.
\end{remark}

\noindent \textbf{Acknowledgments:}  The third author thanks Professor Alexander Grigor'yan and the Faculty of Mathematics at the Universit\"at Bielefeld for their hospitality.

\section{Exit time moments}\label{sectionExit}

Let $M$ be a Riemannian manifold and $p\in C^{\infty}((0,\infty)\times M\times M)$ be its heat kernel associated with the operator $\frac{1}{2}\triangle$. Let $X_t$ be the standard Brownian motion on $M$ with transition density  $p$. The mean exit time of  a relatively compact open set $\Omega \subset M$ with smooth boundary $\partial \Omega$, see \eqref{firstexit} satisfies the following Cauchy problem see \cite{dynkin}
\begin{equation}\left\{\begin{array}{rll}
   \triangle E_{\Omega}+1  &=&0 \quad \text{in}\ \ \Omega,  \\[0.2cm]
    E_{\Omega} &=&0 \quad \text{on}\ \ \partial\Omega.
\end{array}\right.
    \end{equation} 
More generally, for any $k\geq 1$ we consider the expected value of the $k^{\text{th}}$ power of the first exit time $\mathbb{E}^{x}[\tau_\Omega^{k}]=u^k_{\Omega}(x)$, which are called exit time moments of $\Omega$. Setting $u^{0}_{\Omega}\equiv 1,$ it follows from the generalized Dynkin formula, see \cite{AK}, that
 the exit moment $u^k_{\Omega}$ satisfy
 \begin{equation}\label{eq_u^k}
 \left\{ \begin{array}{rll}\triangle u^k_{\Omega}+ku^{k-1}_{\Omega}&=&0 \quad {\rm in}\ \ \Omega,\\[0.2cm]
u^k_{\Omega}&=&0\quad {\rm  on}\ \  \partial\Omega,
\end{array}\right.
\end{equation} for $k\geq 0$.
Indeed, an iteration of the above definition shows that $u_\Omega^k$ solves the following Poisson hierarchy
\begin{equation}\left\{ \begin{array}{rll}
\triangle^{k} u^k_{\Omega}+(-1)^{k-1} k!&=&0 \quad {\rm in}\ \ \Omega,\\[0.2cm]
u^k_{\Omega} = \triangle u^k_{\Omega} = \cdots = \triangle^{k-1} u^k_{\Omega} &=&0\quad {\rm  on}\ \  \partial\Omega,
\end{array}\right.\label{eq2.3}\end{equation}
where $\triangle^{k}$ is the composition of the Laplace operator $k$ times (cf. \cite[Sec.2]{kimberly_kinateder_mcdonald}). From   \eqref{eq2.3} it follows that   the $k^{\text{th}}$-exit moment can be expressed as $u_\Omega^k = k!G^k(1)$, where $G^k$ is the iteration of the Green operator, see \cite[Thm. 3.2]{bessa_gimeno_jorge}.

If $\Omega_1 \subset\Omega_2\subset \cdots \subset \Omega_k \subset \cdots$ is an exhaustion  sequence of $M$ by relatively compact subsets, it is clear that $u^{1}_{\Omega_i}=E_{\Omega_i}(x)\leq E_{\Omega_{i+1}}(x)=u^{1}_{\Omega_{i+1}}$ for all $x\in \Omega_i$, and by induction, $u_{\Omega_i}^k(x)\leq u_{\Omega_{i+1}}^k(x)$ for all $x\in \Omega_i$ and $k\geq 0$,  hence we may define the $k^{\text{th}}$-exit moment from $M$ to be   functions $u^k_{M}\colon M \to (0, +\infty]$ given by $$ u^k_{M}(x)=\lim_{i\to \infty}u^{k}_{\Omega_i}(x).$$

In passing, we note that model manifolds support simple and explicit exit time moments. Here, a Riemannian $n$-manifold $\mathbb{M}_{h}^{n} = ([0,+\infty)\times \mathbb{S}^{n-1}, ds^2)$ endowed with the metric $ds^2=dr^{2}+h^2(r)d\theta^2$, with $h(0)=0$, $h'(0)=1$, and $h^{(2k)}(0)=0$, is called a model manifold. Its radial sectional curvature depends only on the polar radius and it satisfies 
$$\Sec_{\mathbb{M}_h^n}(r,\theta) = -\frac{h''(r)}{h(r)}  \doteq -G(r). $$
Thus, $h$ solves the following initial value problem \eqref{cauchy_problem_h_intro}. The $k^{th}$-exit moment of a ball $\Omega_r=B(r) \subset \mathbb{M}_{h}^{n}$ of radius $r>0$, and centered at the origin $o \in \mathbb{M}_{h}^{n}$, is a radial function given  by 
$$ u^k_{\Omega_r}(x)=k\int_{t(x)}^{r}\int_{0}^{\tau}\frac{h^{n-1}(s)u_{\Omega_r}^{k-1}(s)}{h^{n-1}(\tau)} dsd\tau ,$$ as one can check directly. In particular the mean exit time of $\Omega_r$ is $$E_{\Omega_r}(x)=\int_{t(x)}^{r}\int_{0}^{\tau}\frac{h^{n-1}(s)}{h^{n-1}(\tau)}ds d\tau =\int_{t(x)}^{r}\frac{\text{vol}(B(\tau))}{\text{vol}(\partial B(\tau)) }d\tau.$$
Since the $k^{th}$-exit moment $u^k_{\Omega_r}$ is decreasing, we can easily obtain
\begin{eqnarray*}
u^k_{\Omega_r}(x) &\leq & ku_{\Omega_r}^{k-1}(0)\int_{t(x)}^{r}\int_{0}^{\tau}\frac{h^{n-1}(s)}{h^{n-1}(\tau)} dsd\tau \\[0.2cm]
&\leq & k! \left(\int_{0}^{r}\int_{0}^{\tau}\frac{h^{n-1}(s)}{h^{n-1}(\tau)} dsd\tau\right)^k\\[0.2cm]
&=& k! (u^1_{\Omega_r}(0))^{k}.
\end{eqnarray*}
    The above estimate motivates the bound appearing in Theorem \ref{tower_moments_intro}. In particular, if the model manifold is stochastically incomplete then all the $k^{th}$-exit moments are finite. 

For a minimal submanifold $M^m$ immersed in a Riemannian manifold $N^n$ whose sectional curvature satisfies ${\Sec}_N \leq b \leq 0$, it was proved in \cite[Thm.1]{markvorsen} that the mean exit time from a connected domain $\Omega_R \subset M^m$, which is the intersection of a extrinsic regular ball $B_R(o)\subset N^n$ with $M^m$, can be bounded from above by the mean exit time of the regular ball $\mathbb{B}_R^m(0)$ of a model manifold with constant curvature equal to $b$. Precisely, we have
$$E_{\Omega_R} \leq E_{\mathbb{B}_R^m(0)}.$$
Since $u^k_{\Omega_R}$ satisfies \eqref{eq_u^k} for all $k\geq 1$, the comparison principle followed by a simple induction argument then  shows that
$$u^k_{\Omega_R} \leq u^k_{\mathbb{B}_R^m(0)} \quad \text{for all } \ k\geq 1. $$

\section{Proof of Theorems \ref{thmMTE_intro}, \ref{thm_scholium_intro} and Corollary \ref{tower_moments_intro}}\label{sec1}

\subsection{Proof of Theorem \ref{thmMTE_intro}}
Let $\varphi \colon M^m \to N^{n-\ell}\times L^{\ell}$ be an isometric immersion as in the statement of Theorem \ref{thmMTE_intro}. Since the radial sectional curvature of $N^{n-\ell}$ satisfies $\Sec_{N} \leq -G(\rho_N)\leq 0$, the Hessian comparison theorem \cite[Thm.2.3]{prs_vanishing} yields
$$\hess_{_N} \rho_{_N} \geq \frac{h'}{h}(\rho_{_N}) \left(\langle\, ,\, \rangle - d\rho_{_N} \otimes d\rho_{_N}\right) $$
in the sense of quadratic forms in $B_{r}(o)$, where $h \in C^2(\mathbb{R}^+)$ is a positive solution of the Cauchy problem \eqref{cauchy_problem_h_intro}. By assumption \eqref{hyp_curv_media_intro} we can set $\beta = \eta -1>-1$ such that $\sup_{M}\vert H\vert \leq (m-\ell-\beta-1) (h'/h)(r_{\!_D})$. 

Let $E_{_D}\colon \!D \to [0, +\infty)$ be the mean exit time of 
 $D$ and recall that $E_{_D}$ solves the following  Cauchy problem
 \begin{equation}
 \left\{\begin{array}{rcrll}
 \triangle_{_M}E_{_D}&=&-1 &{\rm  in} & D,\\
 E_{_D} & =&0 &{\rm on}& \partial D.
\end{array}\right.
\end{equation}
The function $U\colon [0,r_{_D}]\to [0, +\infty) $ defined by 
$$ U(t)=\int_{t}^{r_{D}}\frac{1}{h^{\beta}(\tau)}\int_{0}^{\tau}h^{\beta}(\xi)d\xi d\tau,$$ 
with $\beta > -1$, is a positive solution for the Cauchy problem
 $$\left\{ \begin{array}{rll} \label{cauchy_prob_U}
 U''(t)+\beta\displaystyle\frac{h'}{h}(t)U'(t)&=&-1,\\[0.2cm]
 U(r_{D})&=&0.
\end{array}\right. $$ 
 
Define $\Phi:[0,+\infty)\to [0, +\infty) $ by
\begin{equation}\label{eq_Phi_h}
    \Phi(t) = \int_{0}^{t}h(\tau)d\tau 
\end{equation}
and observe that $\Phi$ satisfies $\Phi''(t)- [h'(t)/h(t)]\Phi'(t)=0$ for all $t\geq 0$. Set $s=\Phi(t)$ and define $T(s)=U(t)$. Now we have that 
  \begin{eqnarray}
  -1&=&\frac{\partial^2U}{\partial t^2}(t)+\beta\frac{h'}{h}(t)\frac{\partial U}{\partial t}(t)\nonumber \\[0.2cm]
  &=&\frac{\partial^2 T}{\partial s^2}(s)\left(\frac{\partial s}{\partial t}\right)^{\!\!2}\!\!\!(t)+\frac{\partial T}{\partial s}(s)\frac{\partial^2 s}{\partial t^2}+\beta\frac{h'}{h}(t) \frac{\partial T}{\partial s}\frac{\partial s}{\partial t} \nonumber\\[0.2cm]
  &=& \frac{\partial^2 T}{\partial s^2}(s)h^{2}(t)+(1+\beta)h'(t)\frac{\partial T}{\partial s}\cdot \label{eq1.3}
  \end{eqnarray}
 
 Transplant the function $U$  to $B_{N}(r_{_D})\times L$ defining $\widetilde{U}(x,\xi)=U( \rho_N(x))$. We can restrict $\widetilde{U}$ to $M$ considering $\widetilde{U}\circ \varphi\colon M \to \mathbb{R}$.
Letting $\{e_1, \ldots, e_{m}\}$ be an orthonormal basis of $T_yM$, the Laplacian of $\widetilde{U}\circ \varphi$ at $y\in M$, setting $z=\varphi(y)$ is then computed as 
\begin{eqnarray}
\triangle_{M}(\widetilde{U}\circ \varphi) (y)&=&\sum_{i=1}^{m}\hess_{N\times L} \widetilde{U}(z)(e_i, e_i)+ \langle \grad \widetilde{U}, H\rangle(z)\nonumber \\
&=&\sum_{i=1}^{m}\hess_{N} U\circ t(z)(e_i, e_i)+ \langle \grad U\circ t, H\rangle (z)\nonumber\\
 & =&\sum_{i=1}^{m}\hess_{N} T\circ s(e_i, e_i)+ \langle \grad T\circ s(z), H\rangle (z)\label{eqT} \nonumber\\
& =&\sum_{i=1}^{m} (T''(s)\langle \grad s, e_{i}\rangle^{2}(z) + T'(s)\hess_{N}\,s(z)(e_{i},e_{i})) \\ 
&&\,\,\,+\,\,\,\langle \grad s,\, H\rangle(z). \nonumber
\end{eqnarray}
Let $\{\partial/\partial t, \partial/\partial \theta_{1}, \ldots,\partial/\partial \theta_{n-\ell-1}\}$ and $\{\partial/\partial \xi_1,\ldots, \partial/\partial \xi_{\ell}\}$ be orthonormal bases for $T_{\pi_{1}(\varphi(y))}N$  and  $T_{\pi_{2}(\varphi(y))}L^\ell$, respectively. Writing   
  \begin{eqnarray}\label{basis} e_{i}&=&\alpha_{i}\cdot \partial/\partial t+\sum_{j=1}^{\ell}\beta^{i}_{j}\cdot\partial/\partial \xi_{j} + \sum_{j=1}^{n-\ell-1}\gamma_{j}^{i} \cdot\partial/\partial \theta_{j},
  \end{eqnarray}
 where
 \begin{eqnarray} \alpha_{i}^{2}+\sum_{j=1}^{\ell}(\beta^{i}_{j})^{2}+\sum_{j=1}^{n-\ell-1}(\gamma_{j}^{i})^{2} = 1,
 \end{eqnarray}
we can estimate $ \sum_{i=1}^{m}\hess_{N} \,s (e_{i},e_{i})$ using the Hessian comparison theorem 

\begin{eqnarray}\label{hessiano-de-s-A}
\sum_{i=1}^{m}\hess_{N} \,s (e_{i},e_{i}) &=& \hspace{-1mm} \Phi''(\rho_N)\sum_{i=1}^{m}\langle \grad \rho_N, e_{i}\rangle^{2} +  \Phi'(\rho_N)\sum_{i=1}^{m}\hess_{N}\, \rho_N (e_{i},e_{i})\nonumber  \\
&\geq & \Phi''(\rho_N)\sum_{i=1}^{m}\alpha_{i}^{2} + \Phi'(\rho_N)\sum_{i=1}^{m}\sum_{j=1}^{n-\ell-1}(\gamma_{j}^{i})^{2} \frac{h'}{h}(\rho_N) \nonumber\\
 &= & \Phi''(\rho_N)\sum_{i=1}^{m}\alpha_{i}^{2} +  \Phi'(\rho_N)\frac{h'}{h}(\rho_N)\sum_{i=1}^{m}\left(1-\alpha_{i}^{2}-\sum_{j=1}^{\ell}(\beta^{i}_{j})^{2}\right)\nonumber \\
&=& \Phi'(\rho_N)\frac{h'}{h}(\rho_N)\left(m-\sum_{i=1}^{m}\sum_{j=1}^{\ell}(\beta_{j}^{j})^{2}\right)\nonumber \\
&\geq &(m-\ell)h'(\rho_N).
\end{eqnarray}

We now need the following lemma from \cite{markvorsen}. We will present a simple proof for the reader's convenience.

\begin{lem}\label{lemmaM} Since $G\geq 0$, the function $T$ satisfies
$$ T'(s)<0 \quad \text{and} \quad T''(s)\geq 0 \quad \text{for all } \ s \geq 0. $$
\end{lem}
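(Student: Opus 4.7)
The plan is to read off explicit formulas for $T'(s)$ and $T''(s)$ from the definition of $U$ and the substitution $s = \Phi(t) = \int_0^t h(\tau)\,d\tau$, and then extract the signs from the convexity of $h$ that is forced by the hypothesis $G \geq 0$. A preliminary observation I would record first is that, under $G \geq 0$, the equation $h'' = Gh$ together with $h(0) = 0$, $h'(0) = 1$ forces $h \geq 0$, hence $h'' \geq 0$, hence $h'$ is nondecreasing with $h'(t) \geq 1 > 0$ and $h(t) > 0$ for $t > 0$. Also, since $\eta > 0$, we have $\beta + 1 > 0$, which will matter repeatedly.

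For the sign of $T'$, I would differentiate $U$ directly from its definition to obtain $U'(t) = -h^{-\beta}(t)\int_0^t h^\beta(\xi)\,d\xi$, and use $T'(s) = U'(t)/\Phi'(t) = U'(t)/h(t)$ to conclude
\[
T'(s) = -\frac{1}{h^{\beta+1}(t)}\int_0^t h^\beta(\xi)\,d\xi < 0,
\]
with the limiting value $T'(0) = -1/(\beta+1)$ making the statement hold also at $s = 0$. For the sign of $T''$, I would differentiate this explicit formula once more in $s$, using $d/ds = h(t)^{-1}\,d/dt$; equivalently, I would solve the identity $T''(s)h^2(t) + (1+\beta)h'(t)T'(s) = -1$ from the line \eqref{eq1.3} of the excerpt. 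Either route yields
\[
T''(s) = \frac{1}{h^2(t)}\left[\frac{(\beta+1)h'(t)}{h^{\beta+1}(t)}\int_0^t h^\beta(\xi)\,d\xi - 1\right].
\]
Rewriting $h^{\beta+1}(t) = (\beta+1)\int_0^t h^\beta(\xi)h'(\xi)\,d\xi$ (valid because $h(0) = 0$ and $\beta + 1 > 0$), the desired inequality $T''(s) \geq 0$ reduces to
\[
h'(t)\int_0^t h^\beta(\xi)\,d\xi \;\geq\; \int_0^t h^\beta(\xi)h'(\xi)\,d\xi,
\]
which follows from $h' $ being nondecreasing on $[0,t]$ together with $h^\beta \geq 0$.

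The only genuine difficulty is this last comparison: one must recognize that the appropriate way to use the hypothesis $G \geq 0$ is through the convexity (or equivalently, monotonicity of $h'$) inside the integral, rather than by trying to sign the right-hand side of the ODE directly, which does not obviously give $T'' \geq 0$. Once the monotonicity of $h'$ is exploited, the proof is a short and mechanical calculation valid for all $s \geq 0$.
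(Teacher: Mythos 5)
Your proposal is correct and follows essentially the same route as the paper: the explicit formula for $T'$ from $T'(s)=U'(t)/h(t)$, then solving \eqref{eq1.3} for $T''$ and using the monotonicity of $h'$ (forced by $h''=Gh\geq 0$) together with the identity $h^{\beta}h'=[h^{\beta+1}]'/(\beta+1)$ to get $h^2T''\geq -1+1=0$. The only cosmetic difference is that you isolate the comparison $h'(t)\int_0^t h^\beta \geq \int_0^t h^\beta h'$ as an explicit displayed inequality, whereas the paper performs the same estimate inside the integral.
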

\begin{proof}
From $\displaystyle \frac{\partial U}{\partial t}=\frac{\partial T}{\partial s}\frac{\partial s}{\partial t}$ together with
$\displaystyle\frac{\partial U}{\partial t}(t)=-\frac{1}{h^{\beta}}(t)\int_{0}^{t}h^{\beta}(\xi)d\xi<0$ and $\displaystyle \frac{\partial s}{\partial t}=h(t)$ we have  that 
$$\displaystyle\frac{\partial T}{\partial s} = - [h(t)]^{-(\beta + 1)} \int_{0}^{t}h^\beta (\tau)d\tau <0 \quad \text{for } \ t>0.$$
For the second derivative $T''(s)$ we rewrite equation \eqref{eq1.3} as
  \[h^2(t)\frac{\partial^2 T}{\partial s^2}  = -1 + (1+\beta)h'(t)[h(t)]^{-(\beta + 1)} \int_{0}^{t}h^\beta (\tau)d\tau .\]
  Since $G\geq 0$, we have $h'' \geq Gh\geq 0$ then $h'$ is non-decreasing. Therefore 
  \begin{eqnarray*}
  h^2(t)\frac{\partial^2 T}{\partial s^2} &\geq & -1 + (1+\beta)[h(t)]^{-(\beta + 1)} \int_{0}^{t}\frac{[h^{\beta+1}]'(\tau)}{\beta + 1} d\tau \\[0.2cm]
  &=& -1 + 1 = 0.
  \end{eqnarray*}
This completes the proof of the lemma.
\end{proof}

From   \eqref{eqT}, \eqref{hessiano-de-s-A} and Lemma \ref{lemmaM} we have

 \begin{eqnarray}
 \triangle_{D}\widetilde{U}\circ \varphi(x)
  &=&  \sum_{i=1}^{m}\left[ T''(s)\langle \grad s, e_{i}\rangle^{2} + T'(s)\hess_{N}\,s(e_{i},e_{i})\right]\nonumber\\[0.2cm]
  && + T'(s) \langle \grad s,\, H\rangle\nonumber \\[0.2cm]
  &\leq & T''(s)\vert \grad_{N} s\vert^{2} +  T'(s)\cdot (m-\ell) h'(\rho_N)\nonumber \\[0.2cm]
  & & +T'(s)h(\rho_N)\langle \grad \rho_N, \,H\rangle\nonumber \\[0.2cm]
 &\leq &  T''(s)\vert \grad_{N} s\vert^{2} +  T'(s)\cdot (m-\ell) h'(\rho_N)\nonumber \\[0.2cm]
  & & -T'(s)\cdot (m-\ell- \beta-1) h'(\rho_N)\nonumber \\[0.2cm]
  &= &  T''(s)\vert\partial s/\partial \rho_N\vert^{2} +  T'(s)\cdot (\beta+1) h'(\rho_N)\nonumber \\[0.2cm]
  &=& T''(s)h^{2}(\rho_N) +  T'(s)\cdot (\beta+1) h'(\rho_N) \nonumber\\[0.2cm]
  &=& -1 . \nonumber
 \end{eqnarray}
Since $(\widetilde{U}\circ \varphi -E_{\!_D})\vert_{\partial D}= \widetilde{U}\circ \varphi\vert_{\partial D}\geq 0$ and $\triangle_{M}E_{\!_D} = -1$ we conclude by the comparison principle that
$$E_{\!_D}(x) \leq \widetilde{U}\circ \varphi(x) \quad \text{for every } \ x \in D.$$

If $M$ is cylindrically bounded we may take an exhaustion by compact sets $\Omega_k$ to obtain 
$$ E_{\Omega_k}(x) \leq \widetilde{U}\circ \varphi(x),$$
where $\widetilde U$ is defined using a fixed radius $r$. The result then follows letting $k \rightarrow + \infty$.

\subsection{Proof of Corollary \ref{tower_moments_intro}}

As in the proof of Theorem \ref{thmMTE_intro}, for a given $\beta > -1$, let $U\colon [0,r_{\!_D}]\to [0, \infty) $ be the function defined by 
$$ U(t)=\int_{t}^{r_{D}}\frac{1}{h^{\beta}(\tau)}\int_{0}^{\tau}h^{\beta}(\xi)d\xi d\tau,$$ 
which is a positive solution for the Cauchy problem \eqref{cauchy_prob_U}. For each $k\ge 1$ we set $C_k \doteq \max_D u_D^{k-1}$, and define $U_k(t) = k C_k U(t)$. By definition of $u_D^k$ we know that $U_1 \equiv U$ and from the mean exit time estimate in Theorem \ref{thmMTE_intro}, $U_2$ satisfies
$$U_2 \leq 2 \max_D E_D U \leq 2 U(0)^2.$$
We then assume by induction that $U_{k-1} \leq (k-1)! U(0)^{k-1}$.
We note that, for every $j\geq 1$, $U_{j} > 0$ on $\partial D$ and $\triangle_M U_{j} = jC_j\triangle_M U \leq -j u_D^{j-1}$ in $D$. So, we can apply the comparison principle to show that $u_D^{j} \leq U_{j}$. Therefore,
$$u_D^{k} \leq k C_k \leq k U_{k-1}(0) \leq k! U(0)^k, $$
as claimed.

\subsection{Proof of Theorem \ref{thm_scholium_intro}}

Let $N$ be a Cartan-Hadamard Riemannian manifold whose sectional curvature satisfies $\Sec_N \leq -b < 0$. Let $\sigma \colon [0,+\infty) \rightarrow N$ be a unit speed ray and define 
\begin{equation}\label{def_b_t}
    b_t(x) = \rho_{\sigma(t)} - t,
\end{equation}
where $\rho_{\sigma(t)}(x) = \dist_N(x,\sigma(t))$. The Busemann function associated to the ray $\sigma$ is given by 
$$b_\sigma(x) = \lim_{t\rightarrow + \infty} b_t(x).$$
We recall that the horoball of radius $r>0$ in $N$ determined by $\sigma$ is the set
$$\mathfrak{B}_{\sigma,r}  = \{x \in N \colon b_\sigma(x) \leq r\},$$
and for any $\ell$-dimensional manifold $L$ the region $\mathfrak{B}_{\sigma,r}\times L \subset N\times L$ is a (generalized solid) horocylinder in $N\times L$. 

In \cite[Thm.2.3]{bessa_lira_pigola_setti}, the authors established a Hessian comparison theorem for the Busemann function under pinched sectional curvature  $-a \leq \Sec_N \leq -b< 0$, for $a, b \in \mathbb{R}^{+}$. Here below, we just observe that to obtain a lower bound on the Hessian of the Busemann function, the pinched curvature assumption is not essential.

\begin{lem}\label{busemman_hessian}
    If $N$ is a Cartan-Hadamard manifold with sectional curvature bounded above by $\Sec_{N}\leq -b < 0$, then
  \begin{equation}\label{hessian_busemann}
  \hess\, b_\sigma \geq \sqrt{b}\left(\langle\, ,\, \rangle - db_\sigma \otimes db_\sigma\right) 
  \end{equation}
in the sense of quadratic forms. 
\end{lem}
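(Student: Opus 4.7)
My plan is to obtain the bound by passing to the limit in the Hessian comparison for the distance functions $\rho_{\sigma(t)}$, exploiting the Cartan-Hadamard hypothesis so that everything is smooth and the limit is well behaved.

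\textbf{Step 1: Hessian comparison for $\rho_{\sigma(t)}$.} Fix $x \in N$. Since $N$ is Cartan-Hadamard, for every $t \geq 0$ the function $\rho_{\sigma(t)}$ is smooth on $N \setminus \{\sigma(t)\}$, in particular smooth on a fixed neighbourhood of $x$ for all $t$ sufficiently large. The hypothesis $\Sec_N \leq -b < 0$ together with the standard Hessian comparison theorem (the same one invoked in \cite[Thm.2.3]{prs_vanishing} with $G \equiv b$, so that $h(r) = \sinh(\sqrt{b}\,r)/\sqrt{b}$ and $h'/h = \sqrt{b}\coth(\sqrt{b}\,r)$) yields
\begin{equation*}
\hess \rho_{\sigma(t)}(x) \;\geq\; \sqrt{b}\,\coth\!\bigl(\sqrt{b}\,\rho_{\sigma(t)}(x)\bigr)\bigl(\langle\,,\,\rangle - d\rho_{\sigma(t)} \otimes d\rho_{\sigma(t)}\bigr)
\end{equation*}
in the sense of quadratic forms.

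\textbf{Step 2: Rewriting in terms of $b_t$.} By definition $b_t = \rho_{\sigma(t)} - t$, so $db_t = d\rho_{\sigma(t)}$ and $\hess b_t = \hess \rho_{\sigma(t)}$. Thus the comparison above reads
\begin{equation*}
\hess b_t(x) \;\geq\; \sqrt{b}\,\coth\!\bigl(\sqrt{b}\,(b_t(x)+t)\bigr)\bigl(\langle\,,\,\rangle - db_t \otimes db_t\bigr).
\end{equation*}

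\textbf{Step 3: Passing to the limit $t \to \infty$.} On a Cartan-Hadamard manifold, classical results (Eberlein--O'Neill, Heintze--Im Hof) guarantee that $b_t \to b_\sigma$ in $C^2_{\mathrm{loc}}(N)$, that $b_\sigma$ is a $C^2$ function, and that $|\nabla b_\sigma| \equiv 1$. Since $\rho_{\sigma(t)}(x) = b_t(x) + t \to \infty$ as $t \to \infty$, we have $\coth(\sqrt{b}\,\rho_{\sigma(t)}(x)) \to 1$. Moreover $db_t \otimes db_t \to db_\sigma \otimes db_\sigma$ pointwise and $\hess b_t \to \hess b_\sigma$ in the same topology. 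Taking limits on both sides of the inequality from Step 2 yields
\begin{equation*}
\hess b_\sigma \;\geq\; \sqrt{b}\,\bigl(\langle\,,\,\rangle - db_\sigma \otimes db_\sigma\bigr),
\end{equation*}
which is the desired inequality.

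\textbf{Main obstacle.} The only subtle point is the regularity claim used in Step 3: that under $\Sec_N \leq -b < 0$ on a Cartan-Hadamard $N$ the limit Busemann function is $C^2$ and that $b_t \to b_\sigma$ smoothly enough to preserve the Hessian inequality. Without a two-sided curvature pinching, one cannot expect $C^2$ convergence of $\nabla b_t$ as strongly as in the pinched case treated in \cite{bessa_lira_pigola_setti}, but the lower bound passes to the limit because the quadratic form inequality is stable under pointwise (and even distributional) limits of $\hess b_t$: it is enough to test against a fixed tangent vector and use that the scalar function $b_t$ converges with its first two derivatives on compact sets, a property that holds under the one-sided bound $\Sec_N \leq -b$ alone. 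If a more conservative route is preferred, one can phrase the conclusion in the barrier (viscosity) sense, using $b_t$ as an upper barrier for $b_\sigma$ from above and exploiting the monotonicity of $\coth$, and then invoke the smoothness of $b_\sigma$ on Cartan-Hadamard manifolds with $\Sec \leq 0$ to upgrade the viscosity inequality to the pointwise one.
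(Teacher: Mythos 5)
Your Steps 1 and 2 match the paper, but Step 3 contains a genuine gap: you assert that $b_t \to b_\sigma$ in $C^2_{\mathrm{loc}}$ and that $b_\sigma$ is $C^2$ on a Cartan--Hadamard manifold with only the one-sided bound $\Sec_N \leq -b$. Neither is available. The classical Heintze--Im Hof $C^2$-regularity theorem for Busemann functions requires a two-sided pinching $-a \leq \Sec_N \leq -b < 0$; under an upper curvature bound alone, Busemann functions on Hadamard manifolds are only $C^{1,1}$ in general. Concretely, without a lower curvature bound there is no upper bound on $\hess \rho_{\sigma(t)}$, hence no a priori compactness for the second derivatives of $b_t$, so the claim in your ``Main obstacle'' paragraph that ``$b_t$ converges with its first two derivatives on compact sets \ldots under the one-sided bound alone'' is false --- and this is precisely the hypothesis your limit passage rests on. Your fallback barrier/viscosity route has the same problem at its last step: the ``upgrade to the pointwise inequality'' invokes a smoothness of $b_\sigma$ that does not hold under $\Sec_N \leq 0$ alone.

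The paper avoids this by never touching second derivatives of $b_t$ in the limit. It rewrites the Hessian comparison for $b_t$ in the sense of distributions, integrating against compactly supported vector fields $V$, so that the left-hand side involves only $b_t$ itself (via $\int b_t\{\diverb(V\diverb V)+\diverb\nabla_V V\}$) and the right-hand side involves only $\nabla b_t$ (via $\langle \nabla b_t, V\rangle^2$). Passing to the limit then requires exactly two things: locally uniform convergence $b_t \to b_\sigma$ (immediate) and locally uniform convergence $\nabla b_t \to \nabla b_\sigma$. The latter is the real content, and the paper proves it directly following Heintze--Im Hof/Eberlein: writing $X_n = -\nabla b_n$ as the unit radial field toward $\sigma(n)$, one has $\vert X_n - X\vert^2(p) = 2(1-\cos\angle_p(\sigma(n),\tilde\sigma))$, and the angles tend to zero uniformly on compacta because a Cartan--Hadamard manifold has no focal points and, by O'Sullivan's result, $\dist_N(\sigma(n),\tilde\sigma)$ stays uniformly bounded. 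If you want to salvage your argument, you must either supply this $C^1$-convergence and work distributionally, or keep the barrier formulation and accept the conclusion in the barrier/distributional sense (which is in fact all that the comparison-principle application in Theorem \ref{thm_scholium_intro} needs), rather than claiming a pointwise inequality between classical Hessians.
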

\begin{proof}
 We follow  the proof of \cite[Thm.2.3]{bessa_lira_pigola_setti}. Fix a ball $B_R \subset N$ and apply the Hessian comparison theorem \cite[Thm.2.3]{prs_vanishing} for the function $b_t$ to obtain 
 $$\hess b_t \geq \sqrt{b}\coth(\sqrt{b}\,\rho_{\sigma(t)}) \left\{ \langle\, ,\, \rangle - db_t \otimes db_t\right\} \,\,\, \text{in } \ B_R,$$
 for every $t\geq T>0$. Since the distance function  $b_{\sigma(t)}$ is smooth in $B_R$, the above point-wise inequality also holds in the sense of distributions, i.e.,
 $$\int b_{\sigma(t)}\{ \diver(V \diver V) + \diver \nabla_{V}V\} \geq \sqrt{b}\int \coth(\sqrt{b}\rho_{\sigma(t)})\left\lbrace \vert V \vert^2 - \langle \nabla b_{\sigma(t)},V\rangle^2\right\rbrace$$
 for every vector field $V$ compactly supported in $B_R$. To conclude, one would like to pass the limit in the above inequality to obtain
 $$\int b_{\sigma}\{ \diver(V \diver V) + \diver \nabla_{V}V\} \geq \sqrt{b}\int \left\lbrace \vert V \vert^2 - \langle \nabla b_{\sigma},V\rangle^2\right\rbrace ,$$
 which turns out to be equivalent to the point-wise inequality \eqref{hessian_busemann}. Indeed, the above convergence is valid whence one has proved that $\nabla b_{\sigma(t)}$ converges locally uniformly to $\nabla b_\sigma$. At this point we will proceed differently from \cite[Lemm.2.1]{bessa_lira_pigola_setti} observing that the proof of \cite[Prop.3.1]{heintze-hof}, which is inspired in an unpublished paper by P. Eberlein, shows that 
$$\nabla b_{\sigma(t)} \rightarrow \nabla b_\sigma \in C^1,$$
locally uniformly under the solely upper bound of the sectional curvature. For the sake of completeness, we are going to present the argument with details.  

Let $\sigma$ be a unit speed ray in $N$ and denote by $p_n = \sigma(n)$, for $n \in \mathbb{N}$. Let $X_n = - \nabla b_{n}$ be the radial field in the direction of $p_n$, where $b_n$ is the function defined in \eqref{def_b_t}. Recall that $X_n$ is smooth on $N\backslash \{p_n\}$. To prove that $X_n$ converges locally uniformly to $X = \nabla b_\sigma$, let $K\subset N$ be a compact set such that $p_n \notin K$ for every $n\geq n_0$, for some $n_0 \in \mathbb{N}$. Given $p \in K$ and $n\geq n_0$, consider a sequence of unit speed segments $\sigma_n \colon [0,L_n] \rightarrow N$ connecting $p$ to $p_n = \sigma(n)$ which converges to an asymptote $\tilde{\sigma}$ for $\sigma$ from $p$. Thus, 
\begin{equation}\label{angle_unif-conv}
\vert X_n - X\vert(p) = \vert \sigma'_n(0) - \tilde{\sigma}_{p}\vert \rightarrow 0
\end{equation}
uniformly on $K$, if the angles $\angle_p(p_n,\tilde{\sigma})$ do so. Indeed,
$$\vert \sigma'_n(0) - \tilde{\sigma}'(0)\vert^2 = 2(1 - \langle \sigma'_n(0), \tilde{\sigma}'(0)\rangle).$$ 
Now, since $N$ is Cartan-Hadamard it has no focal points, and by \cite[Prop.3]{Osullivan} there holds
$$\dist_N(p_n,\tilde{\sigma}) \leq \max\{ \dist(\sigma(0),p) \colon p \in K\}.$$
This uniform boundedness of the distances yields the locally uniformly convergence of the angles $\angle_p(p_n,\tilde{\sigma})$ to zero. Therefore, $X_n$ converges locally uniformly to $X$, and the proof is complete.
\end{proof}

With the above preparation, the proof of Theorem \ref{thm_scholium_intro} follows the same steps as in the proof of Theorem \ref{thmMTE_intro}. The main point is to consider the Busemann function $b_\sigma$ defining the horoball in place of the distance function $\rho_N$.

We start noticing that, in view of Lemma \ref{busemman_hessian}, as in  \eqref{hessiano-de-s-A} the Hessian of $s = \Phi(b_\sigma)$, with $\Phi$ defined in \eqref{eq_Phi_h}, becomes
\begin{eqnarray*}
\hess_{N}\, s (e_{i},e_{i}) &=& \Phi''(b_\sigma)\alpha_{i}^{2} + \Phi'(b_\sigma)\sum_{j=1}^{n-\ell-1}(\gamma_{j}^{i})^{2}\hess_{N} b_\sigma(\partial/\partial \theta_{j},\partial/\partial \theta_{j}) \nonumber\\
&\geq & \Phi''(b_\sigma)\alpha_{i}^{2} +  \Phi'(b_\sigma)\frac{h'}{h}(b_\sigma)\left(1-\alpha_{i}^{2}-\sum_{j=1}^{\ell}(\beta^{i}_{j})^{2}\right)\nonumber \\
&=& h'(b_\sigma)\left(1-\sum_{j=1}^{\ell}(\beta_{j}^{j})^{2}\right).
\end{eqnarray*}
Thus,
\begin{eqnarray}\label{ineq_hessian_busemann}
\sum_{i=1}^{m} \hess_{N}\, s (e_{i},e_{i}) \geq (m-\ell)h'(b_\sigma).
\end{eqnarray}

As in the end of the proof of Theorem \ref{thmMTE_intro}, we substitute \eqref{ineq_hessian_busemann} in \eqref{eqT} and apply Lemma \ref{lemmaM} together the fact that $\vert \nabla b_\sigma\vert \leq 1$ to obtain
$$ \triangle_{D}\widetilde{U}\circ \varphi(x) \leq -1.$$
The proof then follows by the comparison principle.

\section{Proof of Theorem \ref{thm_wedge_intro}}

The proof follows the same comparison strategy used in the previous proofs, that is, we need to show the existence of a smooth function $u \colon M^m \to \mathbb{R} $ satisfying $\triangle_ M u\leq -1$. Consider $u \colon N\times L\times P \rightarrow \mathbb{R}$ given by 
$$ u(x,y,z) = \frac{\alpha^2 \rho_L^2(y)}{2\gamma} - \frac{\rho_N(x)^2}{2\gamma},$$
with $\gamma = m - (\alpha^2+1)\ell - k$. At each point $p \in M$, let $\{e_i\}_{i=1}^{m}$ be an orthonormal basis for $T_p M$ and write
\begin{equation*}e_i = a_i \nabla \rho + \sum_{j=1}^{n-1} b_i^j \frac{\partial}{\partial\theta_j} + c_i \nabla t + \sum_{j=1}^{\ell-1} d_i^j \frac{\partial}{\partial\sigma_j} + \sum_{j=1}^{k} e_i^j \frac{\partial}{\partial\xi_j},
\end{equation*}
where 
$$\{\nabla \rho, \partial/\partial\theta_1,\ldots,\partial/\partial\theta_{n-1}\}, \{\nabla t, \partial/\partial\sigma_1,\ldots,\partial/\partial\sigma_{\ell-1}\} \ \ \text{and} \ \ \{\partial/\partial\xi_1,\ldots,\partial/\partial\xi_{k}\}$$ are orthonormal bases to $T_{\pi_1(\varphi)}N$, $T_{\pi_2(\varphi)}L$ and $T_{\pi_3(\varphi)}P$, respectively. We observe that
$$ a_i^2 + \sum_{j=1}^{n-1} (b_i^j)^2 + c_i^2 + \sum_{j=1}^{\ell-1} (d_i^j)^2 + \sum_{j=1}^{k}(e_i^j)^2 = 1.$$

We now compute the Laplacian of $u$ transplanted to $M$ when $\ell \geq 2$, the other case $\ell = 1$ can be treated similarly. We have

\begin{eqnarray*}
\triangle_M u &=& \sum_{i=1}^{m} \hess u (e_i,e_i) \\[0.2cm]
&\leq & \frac{\alpha^2}{\gamma}\sum_{i=1}^{m} \left(c_i^2 + \sum_{j=1}^{\ell-1}(d_i^j)^2\right) - \frac{1}{\gamma}\sum_{i=1}^{m} \left(a_i^2 + \sum_{j=1}^{n-1}(b_i^j)^2\right)\\[0.2cm]
&=& \frac{\alpha^2}{\gamma}\sum_{i=1}^{m} \left(c_i^2 + \sum_{j=1}^{\ell-1}(d_i^j)^2\right) - \frac{1}{\gamma}\sum_{i=1}^{m} \left(1 - c_i^2 - \sum_{j=1}^{\ell-1}(d_i^j)^2 - \sum_{j=1}^{k}(e_i^j)^2\right)\\[0.2cm]
&=& \frac{1}{\gamma}\left[(\alpha^2 +1)\sum_{i=1}^{m} \left(c_i^2 + \sum_{j=1}^{\ell-1}(d_i^j)^2\right) + \sum_{i=1}^{m}\sum_{j=1}^{k}(e_i^j)^2 - m\right]\\[0.2cm]
&\leq & \frac{1}{\gamma}\left[(\alpha^2 +1)\ell +k - m\right]\\[0.2cm]
&\leq & -1.
\end{eqnarray*} This finishes the proof.

\section{Appendix I}

As reported in Remark \ref{remark 3} we are going to present here some lower estimates for the first Dirichlet eigenvalue $\lambda_1(B_r)$ for  the ball $B_r \subset \mathbb{S}^{m-1}(1)$  centred at the north pole and with polar radius $0<r<\pi/2$. The following Barta's criterion to estimate the first eigenvalue was obtained in \cite{barroso_bessa}   
\[
 \lambda_1(B_r)\geq \inf_{t\in [0,r]}\left[\displaystyle\frac{u(t)}{\displaystyle \int_{t}^{r}\int_{0}^{\tau}\displaystyle\frac{\sin(s)^{m-2} u(s)}{\sin(\tau)^{m-2}}dsd\tau}\right]\cdot\]

Using Maple we obtain numerical estimates for the infimum appearing in the above inequality with the choice $u(t)=\cos(t\pi/2r)$. In what follows we are going to compare the sufficient condition $r \leq \arctan(\sqrt{m-1})$ from Corollary \ref{cor_cone_intro} with the hypothetically sufficient condition $\lambda_1(B_r) > 2m$ from Theorem \ref{prop_eigenvalue_intro}.
\begin{enumerate}
    \item[1)] $m=3$ and $r = \arctan(\sqrt{2}) = \frac{\pi}{3}\colon $  
    $$\lambda_1(B_r) \gtrsim 5.85, \quad \text{which is not bigger than $2m = 6$}.$$
    \item[2)] $m=4$ and $r = \arctan(\sqrt{3})\colon $
    $$\lambda_1(B_r) \gtrsim 7.60, \quad \text{which is not bigger than $2m = 8$}.$$
    \item[3)] $m=5$ and $r = \arctan(2)\colon $
    $$\lambda_1(B_r) \gtrsim 9.28, \quad \text{which is not bigger than $2m = 10$}.$$
\end{enumerate}

\section{Appendix II}

We are going to address the problem of characterizing finiteness of the mean exit time in terms of the first Dirichlet eigenvalue in general warped product spaces over $[0,+\infty)$. 

Let $P^\ell = [0,+\infty)\times_w L^{\ell -1}$ be a Riemannian warped product where $L^{\ell -1}$ is a compact manifold without boundary and  $w \colon [0,+\infty) \rightarrow [0,+\infty)$ is the warping function. Let $\Omega \subset L$ be an open domain and consider $\phi$ a positive first eigenfunction of $\Omega$ for the Laplacian operator on $L$, that is, $\triangle_L \,\phi + \lambda \phi = 0$ in $\Omega$. We are going to consider a cone over $\Omega$ defined as follows
$$C_\Omega = \{(r,\theta) \in P^\ell \colon \theta \in \Omega, r>0\}.$$

 Given a non-negative function $u \colon [0,+\infty) \rightarrow [0,+\infty)$, an easy computation shows that (see e.g. \cite{gomes_marrocos_2019})
$$\triangle_{P}(u\phi) = \phi\left(u'' + (\ell-1) u' \frac{w'}{w} - \frac{\lambda}{w^2}u \right). $$
Arguing as in the proof of \cite[Prop.43]{pessoa-pigola-setti}, up to shrinking $\Omega$ and normalizing $\phi$, we can assume that $\inf \phi > 0$ such that $\inf \phi = 1$. In order to use the comparison principle to estimate the mean exit time of $C_\Omega$ from above, one needs to show the existence of a non-negative function $u$ satisfying
\begin{equation}\label{eq_u_solve_ode}
    u'' + (\ell-1) u' \frac{w'}{w} - \frac{\lambda}{w^2}u \leq - 1.
\end{equation}
From \cite[Thm.33]{pessoa-pigola-setti}, to show that $C_\Omega$ has finite mean exit time it is sufficient to prove the finiteness of the mean exit time of the truncated cone $C_{\Omega}(r_0) = [r_0,+\infty)\times_{w}\Omega$, with zero Dirichlet boundary data at $r_{0}$, for some $r_{0}>0$ fixed. That is, we only need to build a positive supersolution for the equation $\Delta u = -1$ on $C_{\Omega}(r_0)$.

One such positive solution $u \colon [r_0,+\infty) \rightarrow [0,+\infty)$ for problem \eqref{eq_u_solve_ode} is obtained as follows. Set 
$$ \psi(t) = \int_{{0}}^t w^{\ell-1}(s)ds$$
and define the non-negative function
$$u(t) = (c\lambda - 1) \int_{{0}}^t \frac{\psi}{\psi'}(s)ds$$
for some constant $c > \lambda^{-1}$ such that 
$$\inf_{t\geq r_{0}} \frac{u}{w^2}(t) \geq c.$$
This last condition gives some restrictions on the warping function $w$. Indeed, this latter inequality can be rewritten as
\begin{equation} \label{cond_w_warping} \frac{1}{w^2(t)}\int_{{0}}^t \frac{\int_{{0}}^s w^{\ell-1}(\tau)d\tau}{w^{\ell-1}(s)}ds \geq \frac{c}{c\lambda - 1} \qquad \forall\, t\geq r_{0}.
\end{equation}

If $w(t) \rightarrow + \infty$ as $t \rightarrow + \infty$, using L'Hospital rule we can derive the following asymptotic condition 
$$\lim_{t\rightarrow + \infty} (\ell w'(t)^2 + w(t)w''(t)) \leq \frac{c\lambda - 1}{2c}.$$
We can rewrite this inequality using the radial sectional-curvature-like function $K(t) = -w''(t)/w(t)$ and the radial mean-curvature-like function on the slices $H(t) = w'(t)/w(t)$ as
$$\lim_{t\rightarrow + \infty} w^2(t)(\ell H(t)^2 - K(t)) \leq \frac{c\lambda - 1}{2c}.$$
So, a necessary condition is given by 
$$\ell H(t)^2 - K(t) = O(w(t)^{-2}). $$
We recall that, if $L = \mathbb{S}^{\ell -1}$ is the round sphere, then the warped product space $P^\ell$ must be stochastically complete in order to use L'Hospital rule in \eqref{cond_w_warping} at infinity.

The above discussion gives rise to the following generalization of \cite[Prop.43,i)]{pessoa-pigola-setti}, which can be viewed as a dual version of \cite[Thm.44]{pessoa-pigola-setti}.

\begin{theorem}
Let $P^\ell$ be an $\ell$-dimensional smooth Riemannian manifold. Assume that there exists a compact set $K \subset P$ such that $P\backslash K$ is isometric to a truncated warped product cone $C_\Omega(r) = [r,+\infty)\times_w \Omega$, for some domain $\Omega$ of a compact manifold $L^{\ell-1}$, and $r>0$. If the first Dirichlet eigenvalue $\lambda=\lambda(\Omega)$ satisfies \eqref{cond_w_warping} for some $c>\lambda^{-1}$, then $P$ has finite mean exit times.
\end{theorem}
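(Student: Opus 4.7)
The plan is to follow the blueprint laid out in the paragraphs preceding the statement, based on the localization principle of \cite[Thm.33]{pessoa-pigola-setti}: since $P\setminus K$ is isometric to the truncated warped product cone $C_\Omega(r)$, finiteness of the mean exit times on $P$ reduces to finiteness of the Dirichlet mean exit time on $C_\Omega(r_0)$, with zero boundary data on $\{r_0\}\times\Omega$. The standard route is to exhibit a nonnegative $v\in C^2(C_\Omega(r_0))$ which vanishes on the boundary, satisfies $\triangle_P v\leq -1$ in the interior, and is bounded on compact sets; the comparison principle on a relatively compact exhaustion then bounds the mean exit time pointwise by $v$.

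I would look for $v$ of the separated form $v(r,\theta)=u(r)\phi(\theta)$, where $\phi$ is a positive first Dirichlet eigenfunction of $\Omega$ in $L^{\ell-1}$ with eigenvalue $\lambda=\lambda(\Omega)$. Proceeding as in \cite[Prop.43]{pessoa-pigola-setti}, I would first shrink $\Omega$ slightly to a subdomain $\Omega_0$ and renormalize $\phi$ so that $\inf\phi=1$; the first eigenvalue depends continuously on the domain, so the hypothesis \eqref{cond_w_warping} is preserved for $\Omega_0$ sufficiently close to $\Omega$. The warped-product Laplacian formula recalled in the excerpt then reads
$$\triangle_P(u\phi)=\phi\!\left(u''+(\ell-1)\frac{w'}{w}u'-\frac{\lambda}{w^2}u\right),$$
so I am reduced to producing $u\in C^2([r_0,\infty))$ with $u\geq 0$, $u(r_0)=0$, and
$$u''(t)+(\ell-1)\frac{w'(t)}{w(t)}u'(t)-\frac{\lambda}{w(t)^2}u(t)\leq -1.$$

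For this I would take the explicit candidate advertised just above the theorem: with the $c>\lambda^{-1}$ supplied by the hypothesis, set $\psi(t)=\int_0^t w^{\ell-1}(s)\,ds$ and define $u(t)=(c\lambda-1)\int_{r_0}^{t}\psi(s)/\psi'(s)\,ds$. Differentiating $u'(t)=(c\lambda-1)\psi(t)/w(t)^{\ell-1}$ and collecting terms yields the clean identity $u''(t)+(\ell-1)(w'/w)u'(t)=c\lambda-1$, independent of $\psi$. The condition \eqref{cond_w_warping} is by construction precisely the inequality $u/w^2\geq c$ on $[r_0,\infty)$, so substituting gives $u''+(\ell-1)(w'/w)u'-(\lambda/w^2)u\leq (c\lambda-1)-c\lambda=-1$. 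Combined with $\phi\geq 1$ this produces $\triangle_P(u\phi)\leq -1$, closing the argument via the comparison principle on a compact exhaustion.

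The ODE identity is a routine computation once $u'$ is in hand, and hypothesis \eqref{cond_w_warping} is tailor-made to deliver the only nontrivial estimate $u/w^2\geq c$. The main delicacy is therefore the normalization: one must shrink the angular base $\Omega$ of the cone to a subdomain on which $\phi$ has a positive infimum, verify that the first eigenvalue of the shrunken domain still exceeds the threshold required by \eqref{cond_w_warping}, and finally pass from finiteness of the mean exit time on the cone over $\Omega_0$ to finiteness on the cone over $\Omega$ via a monotone exhaustion, exactly as in the proof of \cite[Prop.43]{pessoa-pigola-setti}. Once this technical reduction is in place, the explicit supersolution $u\phi$ does all the work.
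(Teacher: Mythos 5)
Your proposal follows the paper's own argument almost verbatim: reduce to the truncated cone $C_\Omega(r_0)$ via \cite[Thm.33]{pessoa-pigola-setti}, separate variables with a first eigenfunction normalized (after a small perturbation of the base domain) to have infimum $1$, and exhibit the radial supersolution built from $\psi(t)=\int_0^t w^{\ell-1}(s)\,ds$. The ODE identity $u''+(\ell-1)(w'/w)u'=c\lambda-1$ and the final estimate $(c\lambda-1)-c\lambda=-1$ are exactly the paper's computation.

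There is, however, one concrete slip. You set $u(t)=(c\lambda-1)\int_{r_0}^{t}(\psi/\psi')(s)\,ds$ in order to force $u(r_0)=0$, and then assert that \eqref{cond_w_warping} is ``by construction precisely'' the inequality $u/w^2\geq c$ on $[r_0,\infty)$. It is not: \eqref{cond_w_warping} corresponds to the paper's normalization $u(t)=(c\lambda-1)\int_{0}^{t}(\psi/\psi')(s)\,ds$, and your $u$ differs from that one by the positive constant $(c\lambda-1)\int_0^{r_0}(\psi/\psi')(s)\,ds$. With your choice one has $u(r_0)/w^2(r_0)=0<c$, so near $t=r_0$ the left-hand side $u''+(\ell-1)(w'/w)u'-(\lambda/w^2)u$ is close to $c\lambda-1>0$, and the supersolution inequality fails in a neighbourhood of the boundary slice. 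The fix is simply to drop the (unnecessary) requirement that the comparison function vanish on $\partial C_\Omega(r_0)$ --- the comparison with the Dirichlet mean exit time only needs $u\phi\geq 0=E$ there --- and keep the integral from $0$, so that \eqref{cond_w_warping} really is the statement $u/w^2\geq c$ on $[r_0,\infty)$. With that correction the rest of your argument, including the continuity-of-$\lambda_1$ perturbation of $\Omega$ (which the paper treats with the same brevity), goes through and coincides with the paper's proof.
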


\begin{ex}{\rm
Let $w(t) = \alpha + kt$ for some $\alpha \geq 0$ and $k\geq 0$ with $(\alpha, k) \neq (0,0)$. In these simple cases, \eqref{cond_w_warping} reads as 
$$ \frac{c}{c\lambda - 1} \leq \frac{1}{2\ell k^2}, $$
independent of $\alpha$, i.e.
$$ c \geq \frac{1}{\lambda - 2\ell k^2}.$$
The condition for finite mean exit times from the cone $C_\Omega(r)$ is 
thence given by
$$\lambda(\Omega) > 2\ell k^2,$$
which coincides with that obtained in \cite[Prop.43]{pessoa-pigola-setti} when $\alpha = 0$ and $k=1$. Moreover, from these general examples we observe: Firstly, if $k$ is large, so that the cone is  wide, then the first eigenvalue of $\Omega$ must also be large in order to produce finite mean exit times from the cone; Secondly, if $k$ is small, so that the cone is narrow, then only a small eigenvalue of $\Omega$ is needed; Thirdly, in the limit, where $k=0$ (and $\alpha >0$), and since $\lambda(\Omega)$ is always positive,
we recover the fact that every cylindrical ``cone'' $C_\Omega(r)$ has finite mean exit times.}
\end{ex}

\end{document}